\documentclass[regno]{amsart}
\usepackage[utf8]{inputenc}
\usepackage{mathrsfs}
\usepackage{mathtools}
\usepackage{amsmath}
\usepackage{amsthm}
\numberwithin{equation}{section}
\usepackage[left=2.5cm, right=2.5cm]{geometry}
\makeindex
\usepackage{multicol}
\usepackage{tikz,pgf}
\usepackage{tikz-cd}
\usepackage{float}
\usepackage[all,knot,arc,color,web]{xy}
\usepackage{graphicx}
\xyoption{curve}
\usepackage{bbold}
\usepackage{fancyhdr}
\theoremstyle{plain}
\newtheorem{theorem}{Theorem}[section]
\newtheorem{proposition}[theorem]{Proposition}
\newtheorem{corollary}[theorem]{Corollary}
\newtheorem{lemma}[theorem]{Lemma}
\newtheorem{definition}[theorem]{Definition}
\theoremstyle{definition}

\long\def\comment#1{}
\title{Higher order terms of Mather's $\beta$-function
for symplectic and outer billiards}
\author{Luca Baracco, Olga Bernardi, Alessandra Nardi}
\address{Dipartimento di Matematica Tullio Levi-Civita, Universit\`a di Padova, via Trieste 63, 35121 Padova, Italy}
\email{baracco@math.unipd.it, obern@math.unipd.it, nardi@math.unipd.it}
\begin{document}
\maketitle
\begin{abstract}
We compute explicitly the higher order terms of the formal Taylor expansion of Mather's $\beta$-function for symplectic and outer billiards in a strictly-convex planar domain $C$. In particular, we specify the third terms of the asymptotic expansions of the distance (in the sense of the symmetric difference metric) between $C$ and its best approximating inscribed or circumscribed polygons with at most $n$ vertices. We use tools from affine differential geometry.
\end{abstract}
\section{Introduction}
The aim of this paper is providing an explicit representation of higher order terms for the formal Taylor expansion of Mather's $\beta$-function (or minimal average action) for symplectic and outer billiards. In particular, we write these new coefficients in terms of the affine curvature and length of the boundary of the billiard table. In the specific case of symplectic billiards, the corresponding Mather's $\beta$-function is related to the maximal area of polygons inscribed in the billiard table. Conversely, for outer billiards, such a function corresponds to the minimal area of circumscribed polygons. These areas are special cases (i.e. for periodic billiard trajectories of winding number $=1$) of the corresponding marked area spectrum invariants for symplectic and outer billiards. In order to state our main theorem, we need some preliminaries. 

\subsection{Twist maps and Mather's $\beta$-function}


Let $\mathbb{S}^1 \times (a,b)$ be the annulus, where $\mathbb{S}^1 = \mathbb{R}/\mathbb{Z}$ and we allow that $a=-\infty$ and/or $b= +\infty$. Given a diffeomorphism 
$$\Phi: \mathbb{S}^1 \times (a,b) \rightarrow \mathbb{S}^1 \times (a,b),$$ 
we assume --if $a$ (resp. $b$) is finite-- that $\Phi$ extends continuously to $\mathbb{S}^1 \times \{a\}$ (resp. $\mathbb{S}^1 \times \{b\}$) by a rotation of fixed angle. Moreover, we denote by 
$$\phi: \mathbb{R} \times (a,b) \rightarrow \mathbb{R} \times (a,b)$$
a lift of $\Phi$ to the universal cover. Then $\phi$ is a diffeomorphism and $\phi(x+1,y) = \phi(x,y) + (1,0)$. The next definition of monotone twist map is well consolidated in literature, we refer e.g. to \cite{Si}[Page 2]. 
\begin{definition}
A monotone twist map $\phi: \mathbb{R} \times (a,b) \rightarrow \mathbb{R} \times (a,b)$, $(x_0,y_0) \mapsto (x_1,y_1)$
is a diffeomorphism satisfying:
\begin{enumerate}
\item $\phi(x_0+1,y_0) = \phi(x_0,y_0) + (1,0)$.
\item $\phi$ preserves orientations and the boundaries of $\mathbb{R} \times (a,b)$. 
\item $\phi$ extends to the boundaries by rotation: $\phi(x_0,a) = (x_0 + \rho_a,a)$ and $\phi(x_0,b) = (x_0 + \rho_b,b)$. 
\item $\phi$ satisfies a monotone twist condition, that is
\begin{equation} \label{TC}
    \frac{\partial x_1}{\partial y_0} > 0.
\end{equation}
\item $\phi$ is exact symplectic; this means that there exists a generating function $S \in C^2(\mathbb{R} \times \mathbb{R}; \mathbb{R})$ for $\phi$ such that
\begin{equation} \label{GF}
y_1dx_1 - y_0dx_0 = dS(x_0,x_1).
\end{equation}
\end{enumerate}
\end{definition}
\noindent Clearly, $S(x_0+1,x_1+1) = S(x_0,x_1)$ and, due to the twist condition, the domain of $S$ is the strip $\{(x_0,x_1): \ \rho_a + x_0 < x_1 < x_0 + \rho_b\}$. Moreover, equality (\ref{GF}) reads
\begin{equation} \label{ESSE}
\begin{cases} y_1=S_2(x_0,x_1)\\
y_0=-S_1(x_0,x_1)
\end{cases}
\end{equation}
and the twist condition (\ref{TC}) becomes $S_{12} < 0$. As a consequence of the monotone twist condition and (\ref{ESSE}), $\{(x_i,y_i)\}_{i \in \mathbb{Z}}$ is an orbit of $\phi$ if and only if $S_2(x_{i-1},x_i) = y_i = -S_1(x_i,x_{i+1})$ for all $i \in \mathbb{Z}$. Equivalently, the corresponding bi-infinite sequence $x := \{x_i\}_{i \in \mathbb{Z}}$ is a so-called critical configuration of the action functional: 
\begin{equation*}
\sum_{i\in \mathbb{Z}} S(x_i,x_{i+1}).
\end{equation*}
We say that a critical configuration $x$ of $\phi$ is minimal if every finite segment of $x$ minimizes the action functional with fixed end points (we refer to \cite{Si}[Page 7] for details). For a twist map $\phi$ generated by $S$, we finally introduce the rotation number and the Mather's $\beta$-function (or minimal average action).
\begin{definition} The rotation number of an orbit $\{(x_i,y_i)\}_{i \in \mathbb{Z}}$ of $\phi$ is 
$$\rho := \lim_{i\to\pm\infty}\frac{x_i}{i}$$
if such a limit exists. 
\end{definition}
\noindent In view of the celebrated Aubry-Mather theory (see e.g. \cite{Bang}), a monotone twist map possesses minimal orbits for every rotation number $\rho$ inside the so-called twist interval $(\rho_{a},\rho_{b})$. 
\begin{definition} \label{Mather beta}
The Mather's $\beta$-function of $\phi$ is $\beta: (\rho_a,\rho_b) \to \mathbb{R}$ with
$$\beta(\rho) := \lim_{N\to\infty}\frac{1}{2N}\sum_{i=-N}^{N-1} S(x_i,x_{i+1})$$
where $\lbrace x_i\rbrace_{i\in \mathbb{Z}}$ is any minimal configuration of $\phi$ with rotation number $\rho$.
\end{definition}
\noindent Clearly, all these facts remain true if we consider a monotone twist map on $\{(x_0,x_1): \ u_a(x_0) < x_1 < u_b(x_0)\}$. A relevant class of monotone twist maps are planar billiard maps. The study of such systems goes back to G.D. Birkhoff \cite{Bir}, who introduced the so-called Birkhoff billiard map, where the reflection rule is ``angle of incidence = angle of reflection''. In the setting of planar billiards, the rotation number of a periodic trajectory is the rational 
$$\frac{m}{n} = \frac{\text{winding number}}{\text{number of reflections}} \in (0, \frac{1}{2}\Big{]},$$ 
\noindent we refer to \cite{Si}[Page 40] for details. \\
A. Sorrentino in \cite{Sor} gave an explicit representation of the coefficients of (formal) Taylor expansion at zero of Mather’s $\beta$-function associated to Birkhoff billiards. More recently, J. Zhang in \cite{Zhang} got (locally) an explicit formula for this function via a Birkhoff normal form. Moreover, M. Bialy in \cite{BiEl} obtained an explicit formula for Mather's $\beta$-function for ellipses by using a non-standard generating function (involving the support function) of the billiard problem.



\subsection{Symplectic billiards} \label{SB}

Let $C$ be a strictly-convex planar domain with smooth boundary $\partial C$ and fixed orientation. Moreover, throughout the paper, we suppose that $\partial C$ has everywhere positive curvature. 
Since $C$ is  strictly-convex, for every point $x \in \partial C$ there exists a unique point $x^*$ such that 
$$T_x\partial C = T_{x^*}\partial C.$$  
We refer to
\begin{equation*} \label{PS}
\mathcal{P} := \{ (x, y) \in \partial C \times \partial C: \ x < y < x^*\}
\end{equation*}
    as the (open, positive) phase-space and we define the symplectic billiard map as follows (see \cite{ALB}[Page 5]):
$$\Phi: \mathcal{P} \rightarrow \mathcal{P}, \qquad (x,y) \mapsto (y,z)$$
where $z$ is the unique point satisfying 
$$z-x \in T_y \partial C.$$
We refer to Figure \ref{Figuraccia}. We notice that $\Phi$ is continuous and can be continuously extended to $\bar{\mathcal{P}}$ so that 
$$\Phi(x,x) = (x,x) \qquad \text{and} \qquad \Phi(x,x^*) = (x^*,x)$$
and therefore the twist interval is $(0,1/2)$. Moreover, see \cite{ALB}[Section 2] for exhaustive details, the symplectic billiard map turns out to be a twist map with generating function $S(x,y)=\omega(x,y)$ where $\omega$ is the area form
$$\omega: \mathcal{P} \to \mathbb{R}, \qquad (x,y) \mapsto \omega(x,y),$$
with dynamics given by
$$(y,z) = \Phi(z,y) \quad \text{iff} \quad \frac{\partial}{\partial y} \left[\omega(x,y) + \omega(y,z) \right] = 0.$$ 
We refer also to \cite{BaBe} for recent advances on symplectic billiards.
\begin{definition}
The marked area spectrum for the symplectic billiard is the map
$$\mathcal{MA}_s(C): \mathbb{Q} \cap{(0, \frac{1}{2}\Big{)}} \to \mathbb{R}$$
that associates to any $m/n$ in lowest terms the maximal area of the periodic trajectories having rotation number $m/n$. 
\end{definition}
\noindent We refer to \cite{Si}[Sections 3.1 and 3.2] for an exaustive treatment of the marked length spectrum and corresponding invariants for the Birkhoff billiard map. Clearly, periodic symplectic billiard maximal trajectories (with winding number $=1$) correspond to convex polygons realizing the maximal (inscribed) area. We call them best approximating polygons inscribed in $C$. More precisely, let $\mathcal{P}_n^i$ the set of all convex polygons with at most $n$ vertices that are inscribed in $C$. 
We define
\begin{equation}\label{7}
\delta(C, \mathcal{P}_n^i):=inf\lbrace \delta(C,P_n) : P_n\in\mathcal{P}_n^i\rbrace
\end{equation}
where $\delta(C,P_n)$ is the area of the complementary of $P_n$ in $C$. Then: 
\begin{equation} \label{beta-MA}
    \beta\left( \frac{1}{n}\right) = -\frac{2}{n} \mathcal{MA}_s(C) \left( \frac{1}{n}\right) = -\frac{2}{n} \left( Area(C) - \delta(C,P^i_n) \right).
\end{equation}
We underline that the sign minus in the above equality comes from the use of the generating function $-\omega(x,y)$; in fact --according to Definition \ref{Mather beta}-- the Mather's $\beta$-function is defined by using minimal, instead of maximal, trajectories. 
\begin{figure}
    \centering
    \includegraphics{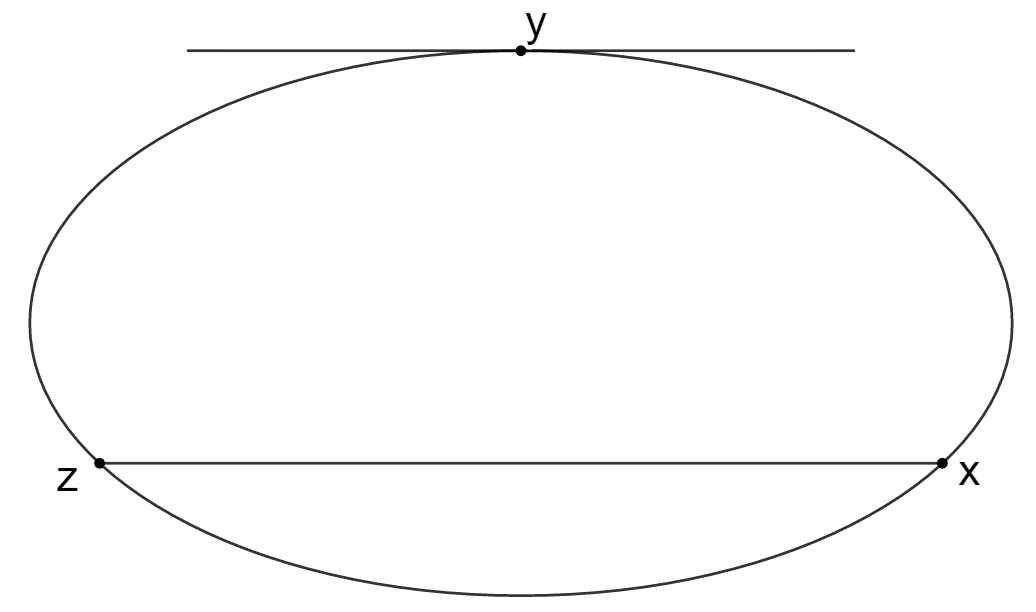}
    \caption{The symplectic billiard map reflection.}
    \label{Figuraccia}
\end{figure}

\subsection{Outer billiards} \label{OB}

\noindent For the same planar domain $C$ as in Section \ref{SB}, we finally briefly introduce the outer billiard map, which is defined on the exterior of $\partial C$ as follows. A point $A$ is mapped to $\Phi(A)$ iff the segment joining $A$ and $\Phi(A)$ is tangent to $\partial C$ exactly at its middle point and has positive orientation at the tangent point. We refer to Figure \ref{FIGURE2}. The natural phase-space for the outer billiard map is the cilinder $\mathbb{S}^1 \times (0,+\infty)$. We notice that $\Phi$ is continuous and can be continuously extended to $\mathbb{S}^1 \times [0,+\infty)$ by fixing $\mathbb{S} \times \{0\}$. We set $\rho = 1/2$ at $+\infty$ so that the twist interval is $(0,1/2)$. Moreover --we refer e.g. to \cite{BiMi}-- also such a $\Phi$ satisfies the twist condition, with the area of the curvilinear triangle of vertices $x, \Phi(A)$ and $y$ (see Figure \ref{FIGURE2} again) as a generating function. In view all these facts, the marked area spectrum for the outer billiard map is defined as follows. 

\begin{definition}
The marked area spectrum for the outer billiard is the map
$$\mathcal{MA}_o(C): \mathbb{Q} \cap (0, \frac{1}{2}\Big{)} \to \mathbb{R}$$
that associates to any $m/n$ in lowest terms the minimal area of the periodic trajectories having rotation number $m/n$. 
\end{definition}

\noindent We notice that periodic outer billiard minimal trajectories (with winding number $=1$) correspond to convex polygons realizing the minimal (circumscribed) area, the so-called best approximating circumscribed polygons. Analogously to the previous section, we denote by $\mathcal{P}_n^c$ the set of circumscribed convex polygons with at most $n$ vertices and 
\begin{equation}\label{///}
\delta(C, \mathcal{P}_n^c):=inf\lbrace \delta(C,P_n) : P_n\in\mathcal{P}_n^i\rbrace
\end{equation}
where $\delta(C,P_n)$ is the area of the complement of $C$ in $P_n$. Consequently:
\begin{equation} \label{beta-MB}
    \beta\left(\frac{1}{n}\right)=\frac{1}{n}\left(\mathcal{MA}_o(C)\left(\frac{1}{n}\right)-Area(C)\right)=\frac{1}{n}\delta(C,\mathcal{P}_n^c).
\end{equation}

\begin{figure}[H] \label{FIGURE2}
 \centering
    \begin{tikzpicture}
    \draw[color=black] (0,0) ellipse (3cm and 2cm);
    \draw[semithick] (-2.5,1.12) -- (-0.5,3.07);
    \draw[semithick] (-0.5,3.07) -- (2,1.5);
    \draw[semithick, dashed] (2,1.5)  -- (3.5, 0.5);
	\filldraw[black] (-0.5,3.07) circle(1.5pt) node[above=3pt] {$\Phi(A)$};
    \filldraw[black] (2,1.5) circle(1.5pt) node[above=3pt] {$y$};
	\filldraw[black] (-2.5,1.12) circle(1.5pt) node[above=3pt] {$x$};
    \draw[semithick] (-2.5,1.12) -- (-4.45,-0.83);
    \filldraw[black] (-4.45,-0.83) circle(1.5pt) node[above=3pt] {$A$};    
    \end{tikzpicture}
    \caption{The outer billiard map reflection.}
\end{figure}
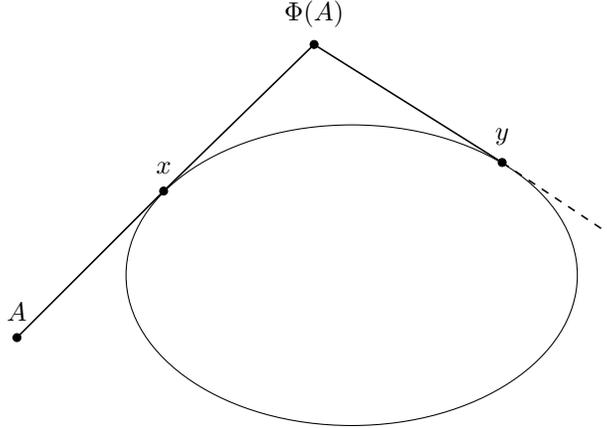   

\subsection{Main result} \label{MAIN}
S. Marvizi and R. Melrose's theory, first stated and proved for Birkhoff billiards \cite{MaMe}[Theorem 3.2], applies both to symplectic and outer billiards (see \cite{ALB}[Section 2.5] and \cite{Tab}[Section 8] respectively). It assures that the corresponding dynamics equals to the time-one flow of a Hamiltonian vector field composed with a smooth map fixing pointwise the boundary of the phase-space at all orders. We refer to \cite{Glu}[Section 2.1] for a detailed proof in the general case of (strongly) billiard-like maps. As an outcome, this result gives the following expansion at $\rho = 0$ of the corresponding minimal average function
$$\beta\left(\rho\right) \sim \beta_1 \rho + \beta_3 \rho^3 + \beta_5 \rho^5 + \beta_7 \rho^7 + \ldots$$
in terms of odd powers of $\rho$. It is well-known --see e.g. \cite{MaMe}[Section 7] again-- that for usual billiards the sequence $\{\beta_{k}\}$ can be interpreted as a spectrum of a differential operator, see also Remark 2.11 in \cite{ALB}. The question is open for symplectic and outer billiards. Coefficients $\beta_1, \ldots, \beta_5$ are known. We refer e.g to \cite{ML} for detailed computations: Theorem 1 for the symplectic case and Theorem 2 for the outer one. Moreover, we suggest \cite{ATnew} for a recent discussion on this topics. The main result of the present paper, stated in the next theorem, is providing coefficients $\beta_7$ both in the symplectic and outer case.

\begin{theorem}\label{TH} Let $C$ be a strictly-convex planar domain with smooth boundary $\partial C$. Suppose that $\partial C$ has everywhere positive curvature. Denote by $k(s)$  the affine curvature  of $\partial C$ with affine parameter $s$. Let $\lambda$ be the affine length of the boundary. 

\begin{enumerate}
\item[$(a)$] The formal Taylor expansion at $\rho = 0$ of Mather's $\beta$-function for the symplectic billiard map has coefficients:
\begin{equation*}
\begin{split}
&\beta_{2k} = 0 \text{ for all } k \\
    &\beta_1=-2Area(C)\\
        &\beta_3=\frac{\lambda^3}{6}\\
         &\beta_5=-\frac{\lambda^4}{5!}\int_0^\lambda k(s)ds\\
        &\beta_7=-\frac{9\lambda^6}{5\cdot7!}\int_0^\lambda k^2(s) ds+\frac{\lambda^5}{15\cdot5!}\left(\int_0^\lambda k(s) ds\right)^2.
    \end{split}
    \end{equation*}
\item[$(b)$] The formal Taylor expansion at $\rho = 0$ of Mather's $\beta$-function for the outer billiard map has coefficients:
\begin{equation*}
\begin{split}
&\beta_{2k} = 0 \text{ for all } k \\
    &\beta_1=0\\
        &\beta_3=\frac{\lambda^3}{24}\\
         &\beta_5=\frac{\lambda^4}{2\cdot5!}\int_0^\lambda k(s)ds\\
        &\beta_7=\frac{421\lambda^6}{5\cdot8!}\int_0^\lambda k^2(s) ds-\frac{\lambda^5}{5\cdot5!}\left(\int_0^\lambda k(s) ds\right)^2.
    \end{split}
    \end{equation*}
\end{enumerate}    
\end{theorem}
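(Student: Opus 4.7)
The plan is to exploit relations (\ref{beta-MA}) and (\ref{beta-MB}), which reduce the computation of $\beta(1/n)$ to the asymptotic behavior, as $n \to \infty$, of the area defects $\delta(C, P_n^i)$ and $\delta(C, P_n^c)$ of the optimal inscribed and circumscribed polygons. Since both symplectic and outer billiards are equi-affine invariants, and since $k(s)$ and $\lambda$ are themselves affinely invariant, I would parametrize $\partial C$ by its affine arc length $\gamma:[0,\lambda]\to\partial C$, so that $\det(\gamma',\gamma'') = 1$ and $\gamma''' = -k(s)\gamma'$. All higher derivatives of $\gamma$ can then be written as polynomials in $\gamma',\gamma'',k$ and the $s$-derivatives of $k$, which is what makes the affine framework convenient for the Taylor expansions that follow.

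For part $(a)$, denote by $s_0 < s_1 < \cdots < s_{n-1}$ the affine parameters of the vertices of an optimal inscribed polygon and set $h_i = s_{i+1}-s_i$. The area of the cap between the chord $\gamma(s_i)\gamma(s_{i+1})$ and the corresponding arc admits a Green's-theorem representation which, after Taylor-expanding $\gamma$ around $s_i$ and eliminating higher derivatives via $\gamma''' = -k\gamma'$, becomes a power series in $h_i$ whose coefficients are polynomials in $k(s_i), k'(s_i), \ldots$. The discrete Euler--Lagrange equations for the optimal polygon --- i.e. the symplectic billiard rule that $\gamma(s_{i+1}) - \gamma(s_{i-1})$ is parallel to $\gamma'(s_i)$ --- provide a three-term recursion which, together with the constraint $\sum h_i = \lambda$, can be solved perturbatively for $h_i$ in powers of $1/n$, with coefficients depending on $s_i$ through $k$ and its derivatives. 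Summing the cap areas, converting sums to integrals via Euler--Maclaurin-type expansions, and comparing with (\ref{beta-MA}) produces the coefficients $\beta_1,\beta_3,\beta_5,\beta_7$.

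For part $(b)$, the strategy is parallel but in the circumscribed picture: label the tangent points of the optimal circumscribed polygon by their affine arc lengths $s_i$, form the polygon whose sides are the tangent lines at $\gamma(s_i)$, and expand the area of each triangular region bounded by two consecutive tangent lines and the arc of $\partial C$ between their tangent points. The outer billiard rule --- that each tangent point is the midpoint of the corresponding side --- furnishes the discrete Euler--Lagrange equations, which again can be solved perturbatively; summing through (\ref{beta-MB}) then yields the analogous coefficients.

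The vanishing of the $\beta_{2k}$ and the values of $\beta_1,\beta_3,\beta_5$ follow from Marvizi--Melrose theory and from \cite{ML}, so they will serve as consistency checks. The main obstacle is the depth of the Taylor expansion required for $\beta_7$: each cap/triangle contributes at order $h_i^3$ while we need accuracy down to order $h_i^7$, and the structurally new terms $\int_0^\lambda k^2\,ds$ and $\big(\int_0^\lambda k\,ds\big)^2$ arise only from comparatively far-out cross terms between the local area expansion and the higher-order corrections to the spacings $h_i$. A careful organization of the computation --- for instance, centering each local expansion at the mid-parameter $(s_{i-1}+s_{i+1})/2$ so that odd-order contributions cancel by symmetry --- will be essential to keep the bookkeeping tractable.
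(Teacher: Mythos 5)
Your proposal follows essentially the same route as the paper: reduce to the asymptotics of $\delta(C,\mathcal{P}_n^i)$ and $\delta(C,\mathcal{P}_n^c)$ via (\ref{beta-MA})--(\ref{beta-MB}), expand the cap/triangle areas to order $h_i^7$ in the affine parametrization using $x^{(3)}=-kx^{(1)}$, solve the discrete Euler--Lagrange (parallel-chord, resp.\ midpoint) conditions perturbatively for the spacings, and sum --- which is precisely the content of Propositions \ref{L1}--\ref{L2}, their circumscribed analogues, and Theorem \ref{THM1}. The only cosmetic difference is your suggestion to center the local expansions at the mid-parameter, whereas the paper expands at the left endpoint and handles the resulting odd-order terms by showing they integrate to zero.
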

\noindent As a straightforward consequence, in Corollary \ref{TAB} we point out that the two coefficients $\beta_5$ and $\beta_7$ allow to recognize an ellipse among all strictly-convex planar domains, both for symplectic and outer billiards.


\section{Preliminaries of affine differential geometry}
Since the areas $\delta(C, \mathcal{P}_n^i)$ and $\delta(C, \mathcal{P}_n^c)$ --defined in (\ref{7}) and (\ref{///}) respectively-- are invariant with respect to affine transformations, we use the affine arc length to parametrize $C$. \\
\indent We recall that the affine arc length and the affine curvature are given respectively by 
$$s(t)=\int_0^t \kappa^{\frac{1}{3}}(\tau) d\tau\qquad 0\leq t\leq l$$
and 
$$\lambda=\int_0^l \kappa^{\frac{1}{3}}(\tau)d\tau$$
where $t$ is the ordinary arc length and $\kappa(t)$ the curvature of $C$. In the following, let $x(s)$ be an affine arc length parametrization of $\partial C$. We denote by $x^{(i)}$ the $i$-th derivative of $x$ and we omit the dependence on $s$. Then we have (see \cite{AG}[Section 3]):
\begin{equation}\label{1}
   \omega(x^{(1)}, x^{(2)})=1, \qquad
 \omega(x^{(1)}, x^{(3)})=0   \end{equation} 

\noindent and 
\begin{equation}\label{3}
k(s) := \omega(x^{(2)},x^{(3)})
\end{equation}
is called the affine curvature of $C$. \\
\indent The next lemma, which is a straightforward consequence of formulae (\ref{1}) and (\ref{3}), will be very useful through the paper.
\begin{lemma}  \label{fatti tec}
The following relations hold:  
\begin{equation} \label{4}
    \omega(x^{(1)}, x^{(4)})=-k, \quad \omega(x^{(2)}, x^{(4)})=k', \quad \omega(x^{(1)}, x^{(5)})=-2k'
\end{equation}
and
\begin{equation} \label{5}
\omega(x^{(3)}, x^{(4)})=k^2,\quad\omega(x^{(2)}, x^{(5)})=k''-k^2\quad \omega(x^{(1)}, x^{(6)})=-3k''+k^2.
\end{equation}
\end{lemma}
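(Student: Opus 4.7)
The plan is to exploit that $\omega$ is a constant, bilinear, antisymmetric $2$-form on $\mathbb{R}^2$, so the Leibniz rule takes the clean form
\begin{equation*}
\frac{d}{ds}\,\omega(u(s),v(s)) \;=\; \omega(u'(s),v(s)) + \omega(u(s),v'(s)).
\end{equation*}
Each identity in (\ref{4}) and (\ref{5}) can therefore be obtained by differentiating one of the three defining relations $\omega(x^{(1)},x^{(2)})=1$, $\omega(x^{(1)},x^{(3)})=0$, $\omega(x^{(2)},x^{(3)})=k$ from (\ref{1}) and (\ref{3}), and rearranging using the antisymmetry $\omega(v,v)=0$.

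First I would establish the affine Frenet equation in dimension two. Since $\omega(x^{(1)},x^{(2)})=1$, the pair $\{x^{(1)},x^{(2)}\}$ is a basis of $\mathbb{R}^2$, so we may write $x^{(3)} = \alpha\,x^{(1)} + \beta\,x^{(2)}$. Pairing this expression with $x^{(1)}$ and with $x^{(2)}$ through $\omega$, and invoking (\ref{1}) and (\ref{3}), forces $\beta=0$ and $\alpha=-k$; hence $x^{(3)} = -k\,x^{(1)}$. This single relation is the engine of the whole lemma.

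Next I would compute $x^{(4)}$, $x^{(5)}$, $x^{(6)}$ by iterated differentiation, at each stage substituting the Frenet relation so that every derivative is kept in the basis $\{x^{(1)},x^{(2)}\}$ with coefficients that are polynomials in $k$ and its derivatives. Feeding those expansions into $\omega(x^{(i)},x^{(j)})$ and using only $\omega(x^{(1)},x^{(2)})=1$ and $\omega(x^{(i)},x^{(i)})=0$ then produces each of the six formulas by a one-line bilinearity computation. An equivalent route, perhaps matching the spirit of ``straightforward consequence'' more closely, is to bypass the Frenet equation and simply differentiate (\ref{1}) and (\ref{3}) several times: e.g. one derivative of $\omega(x^{(1)},x^{(3)})=0$ gives $\omega(x^{(1)},x^{(4)}) = -\omega(x^{(2)},x^{(3)}) = -k$, and three further iterations cover the remaining identities.

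I do not expect any genuine obstacle: the entire argument is a bookkeeping exercise. The only mild pitfall is tracking the sign produced by $\omega(x^{(2)},x^{(1)}) = -1$, which is precisely what yields $\omega(x^{(2)},x^{(4)}) = +k'$ and $\omega(x^{(2)},x^{(5)}) = k''-k^2$ (rather than their negatives). Once these signs are handled carefully, all six equalities in (\ref{4}) and (\ref{5}) drop out in sequence.
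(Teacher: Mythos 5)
Your proposal is correct and matches the paper's argument in substance: the paper differentiates the identities \eqref{1} and \eqref{3} via the Leibniz rule to obtain \eqref{4}, and then invokes the affine Frenet relation $x^{(3)}=-kx^{(1)}$ together with further differentiation to obtain \eqref{5}, which is exactly the ``equivalent route'' you describe at the end. Your primary variant --- deriving the Frenet equation first, expanding $x^{(4)},x^{(5)},x^{(6)}$ in the basis $\{x^{(1)},x^{(2)}\}$, and pairing --- is a harmless reorganization of the same computation and yields all six signs correctly.
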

\begin{proof} By deriving the second equality in \eqref{1} and taking into account \eqref{3} we have that $\omega(x^{(1)}, x^{(4)})=-k$. 
Similarly, from \eqref{3} we get $ \omega(x^{(2)},x^{(4)})=k'$. Moreover, by deriving the first  identity of \eqref{4} we obtain the third one. In order to obtain the relations in \eqref{5} it is enough to recall that $x^{(3)}=-kx^{(1)}$ and to derive the identities in \eqref{4}.
\end{proof}

\section{Asymptotic expansion for $\delta(C, \mathcal{P}_n^i)$}
We gather in this section all the technical results in order to prove point $(a)$ of Theorem \ref{TH}.

We begin with a refinement of Lemma 1 in \cite{ML}.
\begin{proposition}\label{L1} For $0\leq r\leq s\leq\lambda$, let $F(r,s)$ be the area of the region between the arc $\{ x(t):\   r\le t\le s\}$, and the line segment with end points $x(r)$ and $x(s)$. Then $$F(r,s)=\frac{1}{2}\left( \frac{(s-r)^3}{3!}-\frac{(s-r)^5}{5!}k(r)-\frac{3(s-r)^6}{6!}k'(r)-\frac{(s-r)^7}{7\cdot5!}k''(r)+\frac{(s-r)^7}{7!}k^2(r)+o((s-r)^7)\right)$$uniformly for all $0\leq r\leq s\leq \lambda$ as $(s-r)\to0$.
\end{proposition}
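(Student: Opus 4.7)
The plan is to represent $F(r,s)$ as a one-dimensional integral against the area form, Taylor-expand to order seven in $h=s-r$, and then invoke Lemma \ref{fatti tec} to simplify each coefficient. First, I would apply Stokes' theorem to the primitive $\tfrac{1}{2}(x_1\,dx_2 - x_2\,dx_1)$ of the Lebesgue area form on the region bounded by the arc $\{x(t):r\le t\le s\}$ and the chord $\overline{x(s)x(r)}$. After parametrizing the chord linearly and computing its contribution, one arrives at the clean formula
\[
F(r,s) \;=\; \tfrac{1}{2}\int_r^s \omega\bigl(x(t)-x(r),\,x'(t)\bigr)\,dt,
\]
where the shift by $x(r)$ conveniently absorbs the chord boundary term.

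Next, I would plug in the Taylor expansions $x(t)-x(r)=\sum_{i\ge 1}\tfrac{x^{(i)}(r)}{i!}(t-r)^i$ and $x'(t)=\sum_{j\ge 0}\tfrac{x^{(j+1)}(r)}{j!}(t-r)^j$, use bilinearity of $\omega$, and integrate the resulting monomials $(t-r)^{i+j}$ over $[r,s]$. This yields
\[
F(r,s) \;=\; \tfrac{1}{2}\sum_{n=2}^{7} \frac{(s-r)^n}{n}\sum_{i=1}^{n-1}\frac{\omega\bigl(x^{(i)}(r),\,x^{(n-i)}(r)\bigr)}{i!\,(n-1-i)!} \;+\; o\bigl((s-r)^7\bigr),
\]
with the remainder uniform in $r\in[0,\lambda]$ because $x$ and all its derivatives are bounded on the compact closed curve $\partial C$, so Taylor's theorem with integral remainder applies uniformly.

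The third step is to evaluate the inner sum for each $n=2,\ldots,7$ using $\omega(x^{(1)},x^{(2)})=1$, $\omega(x^{(1)},x^{(3)})=0$, and the five identities of Lemma \ref{fatti tec}. The antisymmetry $\omega(x^{(i)},x^{(i)})=0$ eliminates the diagonal terms, and one checks by direct enumeration that the coefficient of $(s-r)^n$ vanishes at $n=2,4$, equals $\tfrac{1}{2\cdot 3!}$ at $n=3$, equals $-\tfrac{k}{2\cdot 5!}$ at $n=5$, equals $-\tfrac{3k'}{2\cdot 6!}$ at $n=6$, and at $n=7$ splits into $-\tfrac{k''}{2\cdot 7\cdot 5!}+\tfrac{k^2}{2\cdot 7!}$, exactly reproducing the stated expansion.

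The main obstacle is the bookkeeping at orders $n=6$ and especially $n=7$: at $n=7$ there are six non-vanishing brackets $\omega(x^{(i)},x^{(7-i)})$ whose contributions, weighted by the alternating reciprocal factorials $1/[i!(6-i)!]$, must conspire to leave a single multiple each of $k''$ and $k^2$ with all other combinations cancelling. The calculation is elementary but error-prone; the rest of the argument is formal.
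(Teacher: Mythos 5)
Your proposal is correct and follows essentially the same route as the paper: both start from the integral representation $F(r,s)=\tfrac12\int_r^s\omega\bigl(x(t)-x(r),x'(t)\bigr)\,dt$, Taylor-expand the integrand using the identities of Lemma \ref{fatti tec}, and integrate term by term; your double-sum coefficients $\sum_i \omega(x^{(i)},x^{(n-i)})/[i!(n-1-i)!]$ are just the ungrouped form of the paper's antisymmetrized sum over $j<h$. All the coefficient values you report at orders $n=2,\dots,7$ check out against the stated expansion.
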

\begin{proof}
    Without loss of generality, we assume $r=0$. The area  of the region of $C$ bounded by the segment $[x(0),x(s)]$  is given by 
    \begin{equation} \label{6}
    F(s):=F(0,s)=\frac{1}{2}\int_0^s\omega(x(t)-x(0),x^{(1)}(t))dt.
    \end{equation}
    We consider the Taylor expansion of the function inside the integral and we have  
     \begin{equation} \label{taylor}
     \omega(x(t)-x(0),x^{(1)}(t))=\sum^m_{l=2}\left(\sum_{\substack{j+h-1=l\\1\leq j< h}} \frac{(h-j)\omega(x^{(j)}(0),x^{(h)}(0))}{j!h!}\right)t^l+o(t^m).
     \end{equation}

\noindent By Lemma \ref{fatti tec} and formula \eqref{taylor} we get 
$$ \omega\left(x(t)-x(0),x'(t)\right) = \frac{t^2}{2} -\frac{k(0)}{4!}t^4 -\frac{3k'(0)}{5!}t^5 +\left( \frac{k^2(0)}{6!}-\frac{k''(0)}{5!}\right) t^6 +o(t^6)$$
and the statement follows after integration on $0\le t\le s$. 
\end{proof}

The remaining of this section is devoted to characterize the affine lengths of the $n$ arcs in which $\partial C$ is divided by the vertices of a best approximating polygon $P_n$ inscribed in $C$. 
\begin{proposition}\label{L3} For $n\ge 3$ let $P_n\in\mathcal{P}_n^i$ be a best approximating polygon inscribed in $C$. Let $x(s_{n,i})$, $i = 1, \ldots, n$, be the ordered vertices of $P_n$ and 
$$\lambda_{n,i}:=s_{n,i}-s_{n,i-1}.$$ 
Then \begin{equation}\label{eq1}
    \lambda_{n,i+1}=\lambda_{n,i}+\frac{1}{30}k'(s_{n,i})\lambda_{n,i}^4+\epsilon_1(s_{n,i},\lambda_{n,i})
\end{equation} where  $\lim_{n\to\infty}\frac{\epsilon_1(s_{n,i},\lambda_{n,i})}{\lambda_{n,i}^5}=0$ uniformly in $i$.   
\end{proposition}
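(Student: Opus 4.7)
The plan is to exploit the fact that $P_n$ maximizes the area among inscribed $n$-gons, so its vertices are critical points of the polygonal area functional. Writing the area via the shoelace formula as $\frac{1}{2}\sum_i \omega(x(s_{n,i}), x(s_{n,i+1}))$ and differentiating with respect to $s_{n,i}$ yields the symplectic billiard reflection rule
\[
\omega\bigl(x(s_{n,i+1}) - x(s_{n,i-1}),\, x^{(1)}(s_{n,i})\bigr) = 0,
\]
i.e. the chord from $x(s_{n,i-1})$ to $x(s_{n,i+1})$ is parallel to $T_{x(s_{n,i})}\partial C$.

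Setting $s := s_{n,i}$, $a := \lambda_{n,i}$ and $b := \lambda_{n,i+1}$, I would Taylor-expand $x(s+b)$ and $x(s-a)$ around $s$ and use the values $\omega(x^{(j)}, x^{(1)})$ supplied by Lemma \ref{fatti tec} (extended by one further differentiation to $\omega(x^{(6)}, x^{(1)}) = 3k''-k^2$) together with $\omega(x^{(1)}, x^{(1)}) = \omega(x^{(1)}, x^{(3)}) = 0$. The criticality condition then reduces to
\[
-\frac{b^2-a^2}{2} + \frac{k(s)}{24}(b^4-a^4) + \frac{k'(s)}{60}(b^5+a^5) + O\bigl((a+b)^6\bigr) = 0.
\]
Writing $b = a + \delta$ and matching orders, the leading $a^5$-balance forces $\delta = \frac{1}{30}k'(s)\,a^4 + O(a^5)$. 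Since every other term in the displayed equation is of order $a^7$ or smaller once $\delta = O(a^4)$, the $a^6$-coefficient in $\delta$ must in fact vanish, yielding $\delta = \frac{1}{30}k'(s)\,a^4 + o(a^5)$; this is precisely (\ref{eq1}).

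The main obstacle is the uniformity in $i$. Smoothness and compactness of $\partial C$ make all Taylor remainders uniform in $s$; what is additionally needed is the standard a priori fact that $\max_i \lambda_{n,i} \to 0$ as $n \to \infty$ for best approximating polygons (otherwise, relocating a disproportionately placed vertex would improve the approximation, contradicting optimality). Once this is in hand, every implicit constant above is uniform in $i$, and the error term in (\ref{eq1}) is genuinely $o(\lambda_{n,i}^5)$ uniformly.
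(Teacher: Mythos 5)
Your argument is correct and yields the same expansion, but the extraction step is genuinely different from the paper's. The paper starts from the same criticality condition $\omega(x(s_{n,i-1})-x(s_{n,i+1}),x^{(1)}(s_{n,i}))=0$, but rewrites it as the vector equation $\tfrac{x(s)-x(t)}{s-t}+hx^{(1)}(0)=0$ and applies the Implicit Function Theorem at $s=t=0$, $h=-1$, obtaining a smooth branch $t(s)$ whose Taylor coefficients are computed by repeated implicit differentiation, giving $t(s)=-s+\tfrac{1}{30}k'(0)s^4+o(s^5)$; you instead expand the scalar $\omega$-equation directly in $a=\lambda_{n,i}$, $b=\lambda_{n,i+1}$ and match orders. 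Your route is more elementary (no IFT), at the price of the bootstrap you do carry out: the remainder you label $O((a+b)^6)$ contains $\tfrac{3k''-k^2}{6!}(b^6-a^6)=O(a^5\delta)$, so only after a first pass gives $\delta=O(a^4)$ do all residual terms drop to $O(a^7)$, whence $a\delta=\tfrac{k'}{30}a^5+O(a^7)$ and $\delta=\tfrac{1}{30}k'a^4+O(a^6)=\tfrac{1}{30}k'a^4+o(a^5)$. (Two small wording slips: what must vanish is the $a^5$-coefficient of $\delta$, i.e. the $a^6$-coefficient of $a\delta$, not the $a^6$-coefficient of $\delta$; and $\omega(x^{(1)},x^{(6)})=-3k''+k^2$ is already in Lemma \ref{fatti tec}, so no extra differentiation is needed.) Both arguments need the a priori fact $\max_i\lambda_{n,i}\to 0$ to make the expansion and the uniformity in $i$ meaningful; the paper imports this from Lemma 2 of \cite{ML} (namely $\lambda_{n,1}=c_n/n$, invoked in the proof of Proposition \ref{L2}), and your justification is acceptable --- the cleanest version being that the optimal deficit is $O(1/n^2)$ while an arc of affine length bounded away from zero would contribute a deficit bounded below by a positive constant.
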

\begin{proof}
Since $P_n$ is a best approximating polygon, its vertices satisfy: 
\begin{equation}\label{8}
    \omega\left(x(s_{n,i-1})-x(s_{n,i+1}),x^{(1)}(s_{n,i})\right)=0 \quad \text{for }i=1,...,n
\end{equation}
To study \eqref{8}, we assume $s_{n,i}=0$ and consider the equivalent equation:
$$ x(s)-x(t)+hx^{(1)}(0)=0  \text{ for some } h\in \mathbb{R}.
$$
Excluding the trivial solution $t \equiv s$, we obtain (up to rename $h$):
\begin{equation}\label{eq}
    \frac{x(s)-x(t)}{s-t}+hx^{(1)}(0)=0.
\end{equation} Let us define $$G(s,t):=\frac{x(s)-x(t)}{s-t}$$ and extend it, smoothly, to $s=t$ by setting $G(s,s)=x^{(1)}(s)$. Thus we have 
$$G(s,t)=\frac{x(s)-x(t)}{s-t}=\sum_{k=1}^m \frac{x^{(k)}(0)}{k!}\left(\frac{s^k-t^k}{s-t}\right) +o((s-t)^{m-1})$$
and $s=t=0$, $h=-1$ is a solution for \eqref{eq}. In order to apply the Implicit Function Theorem to solve \eqref{eq} for $h$ and $t$ in terms of $s$, we compute the Jacobian matrix of $G(s,t)+hx^{(1)}(0)$ in $s=t=0$, $h=-1$: $$J_{h,t}\big(G(s,t)+hx^{(1)}(0)\big)_{|s=t=0,h=-1}=\left(x^{(1)}(0),\frac{x^{(2)}(0)}{2}\right).$$ 
Since $detJ_{h,t}=\frac{1}{2}$, it is possible to solve (\ref{eq}) and find $h(s)$ and $t(s)$. Deriving we get
\begin{equation} \label{GI}
\frac{d}{ds}\left(G(s, t(s))+h(s)x^{(1)}(0)\right) = \partial_sG(s,t(s))+\partial_tG(s,t(s))t'(s)+h'(s)x^{(1)}(0)=0.
\end{equation}
Since for $s=0$, $t(0)=0$, we have
$$\frac{x^{(2)}(0)}{2}+\frac{x^{(2)}(0)}{2}t'(0)+h'(0)x^{(1)}(0)=0,$$
and therefore we get $t'(0)=-1$. \\
In order to obtain the higher order derivatives of $t$ in $0$ it is enough to argue in the same way that is deriving \eqref{GI} again and taking into account Lemma \ref{fatti tec}. In such a way we get $$t(s)=-s+\frac{1}{30}k'(0)s^4+o(s^5).$$
\noindent The previous expansion written for general $s_{n,i} \ne 0$ gives immediately formula (\ref{eq1}).

\comment{For the second derivative we get 
\begin{equation} \label{eq2}
 \partial_s^2G+2\partial_t\partial_sGt'(s)+\partial_t^2G(t'(s))^2+\partial_tGt''(s)+h''(s)x^{(1)}(s)=0  
\end{equation}
where$$\partial_s^2G(0,0)=\frac{x^{(3)}(0)}{6}\cdot2=\partial_t^2G(0,0),\qquad \partial_{s,t}^2G(0,0)=\frac{x^{(3)}(0)}{6},$$
and so $$x^{(3)}(0)\left(\frac{1}{3}+\frac{1}{3}+\frac{1}{3}\right)+\frac{x^{(2)}(0)}{2}t''(0)+h''(0)x^{(1)}(0)=0.$$Then from $\omega(x^{(3)}(0),x^{(1)}(0))=0$, we get $t''(0)=0$.\\

In the next term we get $$\sum_{i=0}^3\partial_s^i\partial_t^{3-i}G(s,t)\binom{3}{i}(t'(s))^{3-i}+\left(\dots\right)t''(s)+\partial_tGt'''(s)+h'''(s)x^{(1)}(s)=0$$and $$\partial_s^i\partial_t^{3-i}G(0,0)=\frac{x^{(4)}(0)}{4!}i!(3-i)!.$$Then for $s=0$ we get $$\sum_{i=0}^3\frac{x^{(4)}(0)}{4!}3!(-1)^{3-i}+\frac{x^{(2)}(0)}{2}t'''(0)+h'''(0)x^{(1)}(0)=0$$and so $t'''(0)=0$.\\

Considering the fourth derivative the only term which remains is 
$$\sum_{i=0}^4\frac{x^{(5)}(0)}{5!}4!(-1)^{4-i}+\frac{x^{(2)}(0)}{2}t^{(4)}(0)+h^{(4)}(0)x'(0)=0,$$ $$\frac{x^{(5)}(0)}{5}+\frac{x^{(2)}(0)}{2}t^{(4)}(0)+h^{(4)}(0)x'(0)=0,$$then $t^{(4)}(0)=-\frac{2}{5}\omega(x'(0),x^{(5)}(0))=\frac{4}{5}k'(0).$\\

For the computation of $t^{(5)}(0)$ we need  to derive three times \eqref{eq2} and to keep track only of the terms that involve $t'(0)$ and $t^{(4)}(0)$. We easily get that $t^{(5)}(0)=0$.} 
\end{proof}
\noindent The next proposition is a refinement of Lemma 2 in \cite{ML}.
\begin{proposition}\label{L2} Under the same assumption of the previous proposition, it holds 
\begin{equation} \label{finalmente}
\lambda_{n,i}=\frac{\lambda}{n}-\frac{\lambda^2}{30 n^3} \int_0^\lambda k(s)ds+\frac{\lambda^3}{30 n^3}k\left(\frac{i\lambda}{n}\right)+o\left(\frac{1}{n^3}\right)
\end{equation}
uniformly in $i$ as $n\to\infty$.
\end{proposition}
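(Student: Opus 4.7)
The plan is to combine the recurrence from Proposition \ref{L3} with the constraint $\sum_{i=1}^{n}\lambda_{n,i}=\lambda$. Write $\lambda_{n,i}=\lambda/n+\mu_{n,i}$. A first, crude pass through the recurrence shows that $\mu_{n,i+1}-\mu_{n,i}=O(1/n^{4})$ uniformly in $i$ (since $\lambda_{n,i}=O(1/n)$), hence $\mu_{n,i}-\mu_{n,1}=O(1/n^{3})$ by telescoping; averaging against the constraint $\sum_{i}\mu_{n,i}=0$ then forces $\mu_{n,1}=O(1/n^{3})$. In particular $\lambda_{n,i}=\lambda/n+O(1/n^{3})$ and, choosing the origin of the affine parameter at $s_{n,0}$, $s_{n,i}=i\lambda/n+O(1/n^{2})$, both uniformly in $i$. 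These a priori bounds are precisely what we need to turn various approximation errors into $o(1/n^{3})$ contributions in the next step.

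Next, I telescope the recurrence to obtain
\[
\mu_{n,i}=\mu_{n,1}+\frac{1}{30}\sum_{j=1}^{i-1}k'(s_{n,j})\,\lambda_{n,j}^{4}+o\!\left(\tfrac{1}{n^{3}}\right).
\]
The a priori bounds yield $\lambda_{n,j}^{4}=(\lambda/n)^{4}+O(1/n^{6})$ and $k'(s_{n,j})=k'(j\lambda/n)+O(1/n^{2})$ by smoothness of $k$, so the main term reduces to $\frac{(\lambda/n)^{4}}{30}\sum_{j=1}^{i-1}k'(j\lambda/n)$ up to an $o(1/n^{3})$ error once summed. A standard Riemann-sum estimate for the smooth function $k'$ gives $\sum_{j=1}^{i-1}k'(j\lambda/n)=\frac{n}{\lambda}\bigl(k(i\lambda/n)-k(0)\bigr)+O(1)$, and the $O(1)$ remainder, after multiplication by $(\lambda/n)^{4}/30$, contributes only $O(1/n^{4})$. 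This produces
\[
\mu_{n,i}=\mu_{n,1}+\frac{\lambda^{3}}{30\,n^{3}}\bigl(k(i\lambda/n)-k(0)\bigr)+o\!\left(\tfrac{1}{n^{3}}\right).
\]

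Finally, I pin down $\mu_{n,1}$ by summing the last identity over $i=1,\dots,n$ and invoking $\sum_{i}\mu_{n,i}=0$. A Riemann-sum estimate for $k$ itself gives $\sum_{i=1}^{n}k(i\lambda/n)=\frac{n}{\lambda}\int_{0}^{\lambda}k(s)\,ds+O(1)$, from which one extracts $\mu_{n,1}=-\frac{\lambda^{2}}{30\,n^{3}}\int_{0}^{\lambda}k(s)\,ds+\frac{\lambda^{3}}{30\,n^{3}}k(0)+o(1/n^{3})$. Substituting back into the formula for $\mu_{n,i}$, the two $k(0)$ contributions cancel and \eqref{finalmente} follows. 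The main obstacle is purely bookkeeping: I must check that each of the three replacements (of $\lambda_{n,j}^{4}$ by $(\lambda/n)^{4}$, of $k'(s_{n,j})$ by $k'(j\lambda/n)$, and of a Riemann sum by its integral), as well as the error $\epsilon_1$ from Proposition \ref{L3}, introduces only errors that, summed over at most $n$ indices, remain $o(1/n^{3})$ uniformly in $i$. All of these reductions rest on the smoothness of $k$ together with the a priori bounds extracted in the first step.
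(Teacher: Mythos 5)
Your argument after the first step is correct and takes a genuinely different, more elementary route than the paper. The paper first proves $\lambda_{n,i}=\frac{\lambda}{n}+O(1/n^{3})$ by a two-sided comparison with the difference equations $v_{n,i+1}=v_{n,i}(1\pm Dv_{n,i}^{3})$, then passes to $u_{n,i}=n\lambda_{n,i}$, compares the resulting recurrence with the Cauchy problem $u_n'(t)=\frac{1}{30\lambda}k'(t)u_n^{4}(t)/n^{2}$, integrates that ODE in closed form by separation of variables, and proves $u_n(i)-u_{n,i}=o(1/n^{2})$ by a second comparison argument; only then does it determine the $1/n^{3}$ correction to $\lambda_{n,1}$ by summing over $i$. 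You instead telescope the discrete recurrence of Proposition \ref{L3} directly and replace the resulting sums by Riemann sums of $k'$ and $k$; the closing step (summing over $i$, using $\sum_i\mu_{n,i}=0$ to pin down $\mu_{n,1}$, and observing the cancellation of the $k(0)$ terms) is the same device the paper uses to identify $e_n$. Your bookkeeping checks out: the replacements $\lambda_{n,j}^{4}=(\lambda/n)^{4}+O(1/n^{6})$, $k'(s_{n,j})=k'(j\lambda/n)+O(1/n^{2})$, the $O(1)$ Riemann-sum remainders multiplied by $(\lambda/n)^{4}/30$, and the accumulated $\epsilon_1$ terms (each $o(\lambda_{n,j}^{5})$ uniformly, hence $o(1/n^{4})$ after summation) all land inside $o(1/n^{3})$ uniformly in $i$. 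This bypasses the ODE comparison entirely and is, to my mind, cleaner.

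The one genuine gap is at the very beginning: you justify $\mu_{n,i+1}-\mu_{n,i}=O(1/n^{4})$ by the parenthetical ``since $\lambda_{n,i}=O(1/n)$'', but that uniform a priori bound is precisely what the first half of the paper's proof is devoted to establishing, and as written your reasoning is circular (the telescoping that is supposed to yield $\lambda_{n,i}=\frac{\lambda}{n}+O(1/n^{3})$ already presupposes the $O(1/n)$ bound). A priori the gaps $\lambda_{n,i}$ need only sum to $\lambda$, so none of them is individually known to be comparable to $1/n$. The paper fills this in by anchoring at $\lambda_{n,1}=c_n/n$ (Lemma 2 of \cite{ML}) and sandwiching the sequence between the explicit monotone solutions of $v_{n,i+1}=v_{n,i}(1\pm Dv_{n,i}^{3})$. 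Alternatively, you can extract the bound from your own recurrence: since $\lambda_{n,i+1}/\lambda_{n,i}=1+O(\lambda_{n,i}^{3})$ uniformly for large $n$, one gets $\log\bigl(\max_i\lambda_{n,i}/\min_i\lambda_{n,i}\bigr)\le C\sum_j\lambda_{n,j}^{3}\le C(\max_i\lambda_{n,i})^{2}\,\lambda\le C\lambda^{3}$, and combined with $\min_i\lambda_{n,i}\le\lambda/n$ this yields $\max_i\lambda_{n,i}\le K\lambda/n$ for a constant $K$ independent of $n$. Either way, this step must be supplied explicitly before the rest of your argument can run.
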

\begin{proof} By a standard comparison argument of difference equations, we first prove that 
\begin{equation}\label{luglio}
        \lambda_{n,i}=\frac{\lambda}{n}+O\left(\frac{1}{n^3}\right) \qquad \text{uniformly in }i\text{ as }n\to\infty
    \end{equation}
\noindent Let $D$ be a constant such that $D> \frac{1}{30} \max |k'|$ and consider the two difference equations
\begin{equation}\label{luglio1}
    v_{n,i+1}=v_{n,i}\left(1-Dv_{n,i}^3\right),\quad V_{n,i+1}=V_{n,i}\left(1+DV_{n,i}^3\right).
\end{equation}
In view of Lemma 2 in \cite{ML}, $\lambda_{n,1}=\frac{c_n}{n}$ for some uniform constant.  By taking the first terms of  \eqref{luglio1} both equal to $\frac{c_n}{n}$, it clearly holds 
$$v_{n,i}\le \lambda_{n,i}\le V_{n,i}, \qquad i=1,...,n.$$
Moreover, the sequence $V_{n,i}$ is increasing and $V_{n,n}$ is uniformly bounded. In fact, let $k\geq 1$ be the largest integer such that $V_{n,k}\leq \frac{2c_n}{n}$. If $k<n$, then we should have $V_{n,k+1}>\frac{2c_n}{n}$ and therefore
    $$\frac{2c_n}{n}<\frac{c_n}{n}\left(1+D\left(\frac{2c_n}{n}\right)^3\right)^k<\frac{c_n}{n}\left(1+D\left(\frac{2c_n}{n}\right)^3\right)^n \Rightarrow 
    2<\left(1+D\left(\frac{2c_n}{n}\right)^3\right)^n$$
Since the last term tends to $1$, this contradicts the fact that $k<n$. \\
With an analogous argument, it follows that the sequence $v_{n,i}$ is decreasing and $v_{n,n}$ is uniformly bounded. \\
\comment{Starting from the difference equation \ref{eq1}, we initially prove that its solutions $\lambda_1,\dots,\lambda_n$ with initial data $\lambda_0=\frac{c}{n}$ (by Lemma 2 of \cite{ML}) are uniformly bounded. In order to do that we consider the difference equation \begin{equation}
    v_{i+1}=v_i\left(1+Dv_i^3\right)
\end{equation}
    with $D>0$ constant uniform in $i$ and big enough to have the solution $v_i$ of it bigger than the ones of the equation \ref{eq1}. The solution $\lbrace v_i\rbrace$ is increasing and considering $v_0=\frac{c}{n}$ we know that there exists an integer $k\geq0$ such that for $i\leq k$, $v_k\leq\frac{2c}{n}$. Assume such $k<n$, then we should get $$v_{k+1}>\frac{2c}{n}$$
    $$\frac{2c}{n}<\frac{c}{n}\left(1+D\left(\frac{2c}{n}\right)^3\right)^k<\frac{c}{n}\left(1+D\left(\frac{2c}{n}\right)^3\right)^n$$
    $$2<\left(1+D\left(\frac{2c}{n}\right)^3\right)^n$$
    but the last term, for $n$ big enough, tends to $1$ and this contradicts the hypothesis $k<n$. Then $v_i\leq\frac{2c}{n}$ for $i=1,\dots,n$.\\
    To obtain an estimate from below we study the difference equation \begin{equation}
    v_{i+1}=v_i\left(1-Dv_i^3\right)
\end{equation}
whose solutions are smaller then the ones of \ref{eq1}. The estimate that we obtain with initial data $v_0=\frac{c}{n}$, is $v_i\geq\frac{c}{2n}$ for $i=1,\dots,n$.}
Therefore, up to rename the constant $D$, we have
$$\lambda_{n,1}\left(1-\frac{D}{n^3}\right)^{i-1}\leq\lambda_{n,i}\leq \lambda_{n,1} \left(1+\frac{D}{n^3}\right)^{i-1}.$$
Summing for $i=1,\dots,n$, we get 
$$\lambda_{n,1}\frac{1-\left(1-\frac{D}{n^3}\right)^n}{\frac{D}{n^3}}\leq \lambda \leq\lambda_{n,1}\frac{1-\left(1+\frac{D}{n^3}\right)^n}{-\frac{D}{n^3}} \Rightarrow
\lambda_{n,1}\frac{n}{1+\frac{D}{n^2}}\leq \lambda \leq \lambda_{n,1}\frac{n}{1-\frac{D}{n^2}}$$
\noindent so that $\lambda_{n,1}=\frac{\lambda}{n}+O\left(\frac{1}{n^3}\right)$, which corresponds to formula (\ref{luglio}). Such a formula immediately gives $s_{n,i} = \frac{i\lambda}{n} + O\left(\frac{1}{n^2}\right)$ and therefore --up to renaming the function $\epsilon_1$-- expansion (\ref{eq1}) of Proposition \ref{L3} can be equivalently written as
\begin{equation}\label{diffeq1}
    u_{n,i+1} = u_{n,i} + \frac{1}{30}k'\left(\frac{i\lambda}{n}\right)\frac{u_{n,i}^4}{n^3}+\epsilon_1\left(\frac{i\lambda}{n},\frac{u_{n,i}}{n}\right)
\end{equation}
where $u_{n,i} = n \, \lambda_{n,i} \,$.
Let $u_n$ be the solution of the Cauchy problem (corresponding to (\ref{diffeq1})):
\begin{equation}\label{ode1}
    \begin{cases}
        u'_n(t)=\frac{1}{30 \lambda}k'(t)\frac{u_n^4(t)}{n^2} \\
        u_n\left(\frac{\lambda}{n}\right)=u_{n,1}
    \end{cases}
\end{equation}
and set $u_n(i):=u_n \left(\frac{i\lambda}{n}\right)$. \\
\indent The second part of the proof is devoted to establish the next estimate:
\begin{equation} \label{stima}
u_n(i)-u_{n,i}=o\left( \frac{1}{n^2}\right)
\end{equation}
uniformly in $i = 1, \ldots, n$ as $n \to \infty$. \\
By integrating (\ref{ode1}) between $\frac{i\lambda}{n}$ and $\frac{(i+1)\lambda}{n}$ via separation of variables, we get
$$-\frac{1}{3}\left(\frac{1}{u_n(i+1)^3}-\frac{1}{u_n(i)^3}\right)= \frac{1}{30 \lambda n^2}\left[k\left(\frac{(i+1)\lambda}{n}\right)-k\left(\frac{i\lambda}{n}\right)\right],$$ 
that is
$$\left(u_n(i+1)\right)^{-3} = \frac{1}{10 \lambda n^2} \left[ k\left(\frac{i\lambda}{n}\right) - k\left(\frac{(i+1) \lambda }{n}\right) \right] + u_n(i)^{-3},$$ 
implying
\begin{equation*}
\begin{split}
  u_n(i+1) &= u_n(i) \left({1+\frac{1}{10 \lambda n^2} \left[ k\left(\frac{i\lambda}{n}\right)-k\left(\frac{(i+1)\lambda }{n}\right)\right] u_n(i)^3} \right)^{-1/3} \\
  &= u_n(i) \left(1+ \frac{1}{30 \lambda n^2}\left[k\left(\frac{(i+1)\lambda}{n}\right)-k\left(\frac{i\lambda}{n}\right)\right]u_n(i)^3\right) + O\left(\frac{1}{n^4}\right) \\
  &= u_n(i) + \frac{1}{30n^3}k'\left(\frac{i\lambda}{n}\right) u_n(i)^4 + o\left(\frac{1}{n^3}\right).
\end{split}
\end{equation*}
Taking into account the previous formula and the difference equation (\ref{diffeq1}), we immediately have 
$$u_{n,i+1}-u_n(i+1)=u_{n,i}-u_n(i) + \frac{1}{30n^3}k'\left(\frac{i\lambda}{n}\right)(u_{n,i}^4 - u_n(i)^4)+o\left(\frac{1}{n^3}\right).$$
Equivalently, $z_{n,i} := u_{n,i}-u_n(i)$ solves 
\begin{equation}\label{eqz}
    z_{n,i+1}=z_{n,i}+\frac{C_{n,i}}{30n^3}k'\left(\frac{i\lambda}{n}\right)z_i +o\left(\frac{1}{n^3}\right)
\end{equation}
where $C_{n,i}>0$ are constants uniformly bounded in $i$ for $n \to \infty$. As in the first part of the proof, let $E$ be a constant such that $E > \frac{C_{n,i}}{30} \max |k'|$ (for every $i$ and $n$) and consider the difference equation
$$Z_{n,i+1}=Z_{n,i}\left(1+\frac{E}{n^3}\right)+\frac{c_n}{n^3},$$
with $c_n = o(1)$ and $Z_{n,1}=0$. Comparing the $z_{n,i+1}$'s in (\ref{eqz}) with the terms $Z_{n,i+1}$ of the previous difference equation, for every $i = 1, \ldots, n$, we get
$$z_{n,i+1} \le Z_{n,i+1} = \frac{c_n}{E}\left(\left(1+\frac{E}{n^3}\right)^i-1\right) \leq o(1)\left(\frac{1}{1-\frac{iE}{n^3}}-1\right) = o(1)\frac{i}{n^3} = o\left(\frac{1}{n^2}\right),$$
which corresponds to (\ref{stima}). \\
\indent We finally explicit the term of order $3$ in formula (\ref{finalmente}). By integrating (\ref{ode1}) between $\frac{\lambda}{n}$ and $\frac{(i+1)\lambda}{n}$, we have 
\begin{equation*}
\begin{split}
  u_n(i+1) &= u_n(1) \left({1+\frac{1}{10 \lambda n^2} \left[ k\left(\frac{\lambda}{n}\right)-k\left(\frac{(i+1)\lambda }{n}\right)\right] u_n(1)^3} \right)^{-1/3} \\
  &= u_n(1) \left(1+ \frac{1}{30 \lambda n^2}\left[k\left(\frac{(i+1)\lambda}{n}\right)-k\left(\frac{\lambda}{n}\right)\right]u_n(1)^3\right) + O\left(\frac{1}{n^4}\right).
\end{split}
\end{equation*}
Moreover, by formula (\ref{stima}), $u_{n,i+1} = u_{n}(i+1) + o \left( \frac{1}{n^2}\right)$ or equivalently (since $\lambda_{n,i} = \frac{u_{n,i}}{n}$)
$$\lambda_{n,i+1} = \frac{u_{n}(i+1)}{n} + o \left( \frac{1}{n^3}\right).$$
Plugging the previous expression of $u_n(i+1)$ (in terms of $u_n(1) = n \, \lambda_{n,1}$) into formula above, we obtain
$$\lambda_{n,i+1} = \lambda_{n,1} \left[ 1 + \frac{1}{30\lambda n^2} \left( k\left(\frac{(i+1)\lambda}{n}\right)-k\left(\frac{\lambda }{n}\right) \right) n^3 \lambda_{n,1}^3 \right] + o \left( \frac{1}{n^3} \right).$$
In more detail, since $\lambda_{n,1} = \frac{\lambda}{n} + \frac{e_n}{n^3} + o \left( \frac{1}{n^3} \right)$ for some uniform constant (see formula (\ref{luglio})), we have
\begin{equation}\label{lambda n 1}
\lambda_{n,i+1} = \frac{\lambda}{n} + \frac{\lambda^3}{30 n^3} \left( k\left(\frac{(i+1)\lambda}{n}\right)-k\left(\frac{\lambda }{n}\right) \right) + \frac{e_n}{n^3} + o \left( \frac{1}{n^3} \right).
\end{equation}
Summing for $i = 0, \ldots, n-1$, we conclude that
$$\lambda = \sum_{i = 0}^{n-1} \lambda_{n,i+1} = \lambda + \frac{\lambda^3}{30n^3} \sum_{i = 0}^{n-1} \left( k\left(\frac{(i+1)\lambda}{n}\right)-k\left(\frac{\lambda }{n}\right) \right) + \frac{e_n}{n^2} + o \left( \frac{1}{n^2} \right),$$
that is
$$e_n=-\frac{\lambda^3}{30n}\sum_{i=0}^{n-1}
\left( k\left(\frac{(i+1)\lambda}{n}\right)-k\left(\frac{\lambda }{n}\right) \right)
+o(1)$$
and the limit for $n\to\infty$ leads to $$e_n=-\frac{\lambda^2}{30}\int_0^\lambda k(s)-k\left(0\right)ds.$$ 
Plugging the expression for $e_n$ into formula (\ref{lambda n 1}), we finally obtain (for $n\to\infty$):
\begin{equation*}
\lambda_{n,i+1} = \frac{\lambda}{n} - \frac{\lambda^2}{30 n^3} \int_0^\lambda k(s) ds +  \frac{\lambda^3}{30 n^3}  k\left(\frac{(i+1)\lambda}{n}\right)   + o \left( \frac{1}{n^3} \right),
\end{equation*}
which equals to (\ref{finalmente}). 
\end{proof}

\section{Asymptotic expansion for $\delta(C, \mathcal{P}_n^c)$}
In this section, analogously to the previous one, we collect the technical results in order to prove point $(b)$ of Theorem \ref{TH}. \\
 \indent The next result is the analogous of Proposition \ref{L1} for circumscribed polygons.
\begin{proposition}
    For $0\leq r\leq s\leq\lambda$, let $H(r,s)$ be the area of the region between the tangents in $x(r)$ and $x(s)$ and the  arc $\{ x(t) : r\le t\le s\}$. Then $$H(r,s)=\frac{(s-r)^3}{24}+\frac{(s-r)^5}{2\cdot5!}k(r)+\frac{3(s-r)^6}{6!}k'(r)+\frac{(s-r)^7}{7\cdot5!}k''(r)+\frac{17 (s-r)^7}{8!}k^2(r)+o((s-r)^7))$$uniformly for all $0\leq r\leq s\leq \lambda$ as $(s-r)\to0$.
\end{proposition}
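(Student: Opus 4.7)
The natural strategy is to write
\[
H(r,s) = T(r,s) - F(r,s),
\]
where $T(r,s)$ is the area of the \emph{straight} triangle formed by the chord $[x(r),x(s)]$ and the two tangent segments meeting at their intersection $p$, and $F(r,s)$ is the arc--chord segment already expanded in Proposition~\ref{L1}. Convexity of $C$ places the arc inside this triangle, so the identity is immediate and the task reduces to a Taylor expansion of $T$ through order $7$ in $(s-r)$.

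Setting $r = 0$ without loss of generality (uniformity in $r$ follows because every Taylor remainder is controlled by $\sup_{\partial C}|k^{(j)}|$ for $j \le 5$), I would parametrize $p$ by $p - x(0) = u\,x^{(1)}(0)$ and $p - x(s) = -v\,x^{(1)}(s)$. Solving the compatibility relation $x(s) - x(0) = u\,x^{(1)}(0) + v\,x^{(1)}(s)$ by pairing with $x^{(1)}(s)$ and $x^{(1)}(0)$ under $\omega$ gives $u = A(s)/C(s)$ and $v = B(s)/C(s)$, where
\[
A(s) := \omega(x(s)-x(0),\,x^{(1)}(s)),\ \ B(s) := \omega(x^{(1)}(0),\,x(s)-x(0)),\ \ C(s) := \omega(x^{(1)}(0),\,x^{(1)}(s)).
\]
A direct computation of the triangle area in this parametrization yields $T(0,s) = \tfrac{1}{2}\,u\,v\,C(s) = A(s)B(s)/(2C(s))$.

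Each of $A, B, C$ expands by pure Taylor bookkeeping, with generic coefficient $\omega(x^{(i)}(0), x^{(j)}(0))/(i!\,j!)$. Lemma~\ref{fatti tec} together with its natural extensions --- obtained by differentiating the listed identities and eliminating higher derivatives via $x^{(3)} = -k\,x^{(1)}$, $x^{(4)} = -k'\,x^{(1)} - k\,x^{(2)}$, $x^{(5)} = (k^2 - k'')\,x^{(1)} - 2k'\,x^{(2)}$, and so on --- supplies every entry one needs, notably $\omega(x^{(1)}, x^{(6)}) = k^2 - 3k''$ and $\omega(x^{(1)}, x^{(7)}) = 6kk' - 4k'''$; a convenient shortcut for $A$ is the closed ODE $A''(s) = 1 - k(s)\,A(s)$, which follows from $x^{(3)} = -k\,x^{(1)}$ and generates the coefficients of $A$ recursively. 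Dividing $AB$ by $2C$ as a formal power series and subtracting the expansion of $F$ given by Proposition~\ref{L1} then identifies every coefficient of $H(0,s)$. The main obstacle is purely clerical: several leading coefficients of $T$ vanish only after careful cancellation, and the $(s-r)^7$ coefficient of $H$ combines the $s^7$ contributions from $A_5B_2$, $A_4B_3$, $A_3B_4$, $A_2B_5$ in $AB$ with the first two terms of $1/C$ and with the $(s-r)^7$ coefficient of $F$, so any index or sign slip propagates directly into the claimed $17/8!$ factor.
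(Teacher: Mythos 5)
Your proposal is correct and follows essentially the same route as the paper: the identity $H=T-F$ with $T=AB/(2C)$ for the tangent-triangle area (the paper derives the identical quotient $\tfrac{1}{2}\,\omega(x'(s),x(s)-x(0))\,\omega(x'(0),x(0)-x(s))/\omega(x'(0),x'(s))$ via the tangent-line intersection), followed by Taylor expansion of $A$, $B$, $C$ using Lemma \ref{fatti tec} and subtraction of the expansion of $F$ from Proposition \ref{L1}. The only thing left implicit in your write-up is the final arithmetic assembling the order-$7$ coefficient, which is exactly the computation the paper displays.
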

\begin{proof}
    Without loss of generality, we assume $r=0$. Consider the vertex $P$ of the polygon $P_n\in\mathcal{P}_n^c$ whose edges are tangent to $C$ in $x(0)$ and $x(s)$: $$P=x(s)+t_1x'(s)=x(0)+t_2x'(0) \Rightarrow \omega(x'(0), x(0)-x(s))=t_1\omega(x'(0),x'(s))$$so we get\begin{equation}
        P=x(s)+x'(s)\frac{\omega(x'(0), x(0)-x(s))}{\omega(x'(0),x'(s))}.
    \end{equation}
    The area of the triangle of vertices $x(0)$, $x(s)$ and $P$ is 
    \begin{equation*}
    \begin{split}-\frac{1}{2}\omega(P-x(s), x(0)-x(s))
    &=\frac{1}{2}\frac{\omega(x'(s), x(s)-x(0))\omega(x'(0), x(0)-x(s))}{\omega(x'(0),x'(s))}.
    \end{split}        
    \end{equation*}
    Taking $M,N,L,J\in\mathbb{N}$ large enough in order to approximate the above quantity at order $7$, we obtain
    $$\frac{1}{2}\omega\left(\sum_{h_1=1}^M \frac{x^{(h_1)}(0)s^{h_1-1}}{(h_1-1)!}+o(s^M),\sum_{h_2=1}^N \frac{x^{(h_2)}(0)s^{h_2}}{h_2!}+o(s^N)\right)\left(-\sum_{h_3=2}^L\frac{\omega(x'(0),x^{(h_3)}(0))s^{h_3-1}}{h_3!}+o(s^L)\right)\cdot$$$$\cdot\left(\sum_{h_4=2}^J\frac{\omega(x'(0),x^{(h_4)}(0))s^{h_4-2}}{(h_4-1)!}+o(s^J)\right)^{-1}=$$
    $$=\frac{1}{2}\left(\sum^M_{m=1}\sum_{j+h-1=m}\frac{\omega(x^{(h)}(0),x^{(j)}(0))s^{m}}{j!(h-1)!}+o(s^M)\right)\left(-\sum_{h_3=2}^L\frac{\omega(x'(0),x^{(h_3)}(0))s^{h_3-1}}{h_3!}+o(s^L)\right)\cdot$$$$\cdot\left(\sum_{h_4=2}^J\frac{\omega(x'(0),x^{(h_4)}(0))s^{h_4-2}}{(h_4-1)!}+o(s^J)\right)^{-1}=$$
    $$=\frac{1}{2}\left(\frac{1}{2}s^2-\frac{k(0)}{4!}s^4-\frac{3k'(0)}{5!}s^5-\frac{k''(0)}{5!}s^6+\frac{k^2(0)}{6!}s^6+o(s^6)\right)\biggl(\frac{1}{2}s-\frac{k(0)}{4!}s^3-\frac{2k'(0)}{5!}s^4+$$ $$-\frac{3k''(0)}{6!}s^5+\frac{k^2(0)}{6!}s^5+o(s^5)\biggr)\left(1-\frac{k(0)}{3!}s^2-\frac{2k'(0)}{4!}s^3-\frac{3k''(0)}{5!}s^4+\frac{k^2(0)}{5!}s^4+o(s^4)\right)^{-1}=$$
    $$=\frac{1}{2}\left(\frac{1}{2}s^2-\frac{k(0)}{4!}s^4-\frac{3k'(0)}{5!}s^5-\frac{k''(0)}{5!}s^6+\frac{k^2(0)}{6!}s^6+o(s^6)\right)\biggl(\frac{1}{2}s-\frac{k(0)}{4!}s^3-\frac{2k'(0)}{5!}s^4+$$ $$-\frac{3k''(0)}{6!}s^5+\frac{k^2(0)}{6!}s^5+o(s^5) \biggr) \left(1+\frac{k(0)}{3!}s^2+\frac{2k'(0)}{4!}s^3+\frac{3k''(0)}{5!}s^4-\frac{k^2(0)}{5!}s^4+\frac{k^2(0)}{36}s^4+o(s^4)\right)=$$
    
    $$=\frac{1}{2}\left(\frac{1}{4}s^3+\frac{3k^2(0)}{4\cdot6!}s^7+o(s^7)\right).$$
By difference of areas and taking into account the result of Proposition \ref{L1} for $F(0,s)$, we get
    $$H(0,s)=
    \frac{1}{24}s^3+\frac{k(0)}{2\cdot5!}s^5+\frac{k'(0)}{4\cdot5!}s^6+\frac{k''(0)}{14\cdot5!}s^7+\frac{17k^2(0)}{8!}s^7+o(s^7).$$
\end{proof}

\indent The next two results correspond to Propositions \ref{L3} and \ref{L2} in the case of circumscribed best approximating polygons. 
\begin{proposition}\label{L4} For $n\ge 3$ let $P_n\in\mathcal{P}_n^c$ be a best approximating polygon circumscribed to $C$. Let $x(s_{n,i})$, $i = 1, \ldots, n$, be the ordered tangency points of the edges of $P_n$ to $C$ and 
$$\lambda_{n,i}:=s_{n,i}-s_{n,i-1}.$$ 
Then \begin{equation}\label{eq2}
   \lambda_{n,i+1}=\lambda_{n,i}-\frac{8}{5!}k'(s_{n,i})\lambda_{n,i}^4+\epsilon_2(s_{n,i},\lambda_{n,i})
\end{equation} where  $\lim_{n\to\infty}\frac{\epsilon_2(s_{n,i},\lambda_{n,i})}{\lambda_{n,i}^5}=0$ uniformly in $i$.   
\end{proposition}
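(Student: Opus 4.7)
The strategy mirrors Proposition \ref{L3}, with the inscribed optimality condition replaced by its circumscribed analogue. The first step is to derive the first-order condition satisfied by a best approximating circumscribed polygon. Since such a polygon corresponds to a periodic outer billiard trajectory of winding number one (see Section \ref{OB}), and the outer billiard map $A\mapsto\Phi(A)$ has tangency at the midpoint of the segment $A\Phi(A)$, each tangency point $x(s_{n,i})$ is the midpoint of the corresponding edge $A_iA_{i+1}$ of the polygon, where $A_i$ denotes the intersection of the tangent lines at $x(s_{n,i-1})$ and $x(s_{n,i})$.

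Normalizing $s_{n,i}=0$ and writing $r:=s_{n,i-1}$, $u:=s_{n,i+1}$, both $A_i$ and $A_{i+1}$ lie on the tangent through $x(0)$, so that $A_i=x(0)+\alpha(r)\,x^{(1)}(0)$ and $A_{i+1}=x(0)+\alpha(u)\,x^{(1)}(0)$, where
$$\alpha(v):=\frac{\omega\bigl(x^{(1)}(v),\,x(v)-x(0)\bigr)}{\omega\bigl(x^{(1)}(v),\,x^{(1)}(0)\bigr)}$$
is obtained by solving the corresponding $2\times 2$ linear systems. The midpoint condition $x(0)=(A_i+A_{i+1})/2$ then becomes
$$\alpha(r)+\alpha(u)=0.$$
The numerator of $\alpha$, up to sign, coincides with the integrand already appearing in Proposition \ref{L1}, while the denominator is readily expanded via Lemma \ref{fatti tec}; factoring out the common zero at $v=0$ and dividing the resulting power series, one obtains
$$\alpha(v)=\frac{v}{2}+\frac{k(0)}{24}v^3+\frac{k'(0)}{60}v^4+O(v^5).$$
In particular $\alpha'(0)=1/2\neq 0$, so the implicit function theorem furnishes a unique smooth solution $u=u(r)$ of the midpoint equation near $r=0$, with $u(0)=0$ and $u'(0)=-1$.

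Writing $u(r)=-r+f(r)$ with $f=O(r^2)$ and substituting, one has $u^3+r^3=O(r^2 f)$ and $u^4+r^4=2r^4+O(r^3 f)$, so the leading balance reduces to
$$\frac{f}{2}+\frac{k'(0)}{30}\,r^4+o(r^5)=0,$$
yielding
$$f(r)=-\frac{k'(0)}{15}\,r^4+o(r^5)=-\frac{8\,k'(0)}{5!}\,r^4+o(r^5).$$
Since $\lambda_{n,i}=-r$ and $\lambda_{n,i+1}=u(r)=-r+f(r)$, equation (\ref{eq2}) follows (with $k'(s_{n,i})$ replacing $k'(0)$) upon undoing the normalization $s_{n,i}=0$.

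The main technical obstacle is the power-series division producing the expansion of $\alpha(v)$: both numerator and denominator vanish at $v=0$ (to orders two and one respectively), so a common factor must be cancelled before expanding the quotient. Uniformity of the error term $\epsilon_2(s_{n,i},\lambda_{n,i})$ in $i$ follows from the smoothness of $\alpha$ on the compact parameter interval $[0,\lambda]$, together with the uniform bound $\lambda_{n,i}=O(1/n)$, which can be established by the barrier argument used at the beginning of the proof of Proposition \ref{L2}.
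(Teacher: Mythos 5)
Your proposal takes essentially the same approach as the paper: the midpoint condition $\alpha(r)+\alpha(u)=0$ you derive is exactly the paper's optimality condition $G(s)=-G(t)$ (your $\alpha$ coincides with the paper's $G$), and you then solve it perturbatively near $r=u=0$. The paper merely sketches this last step by deferring to Proposition \ref{L3}, whereas you carry out the expansion explicitly; your intermediate coefficients ($\alpha(v)=\tfrac{v}{2}+\tfrac{k(0)}{24}v^3+\tfrac{k'(0)}{60}v^4+O(v^5)$, hence $f=-\tfrac{8}{5!}k'(0)r^4+o(r^5)$) are correct and yield the stated recursion.
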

\begin{proof}
If $x(s_{n,1})$, $x(s_{n,2})$ and $x(s_{n,3})$, $0\leq s_{n,i}\leq\lambda$, are three successive tangent points of $P_n$ to $C$, the corresponding vertices of $P_n$ are $$P=x(s_{n,2})+x'(s_{n,2})\frac{\omega(x'(s_{n,1}),x(s_{n,1})-x(s_{n,2}))}{\omega(x'(s_{n,1}),x'(s_{n,2}))}, \qquad Q=x(s_{n,3})+x'(s_{n,3})\frac{\omega(x'(s_{n,3}),x(s_{n,3})-x(s_{n,2}))}{\omega(x'(s_{n,3}),x'(s_{n,2}))}.$$
Since $P_n$ is a best approximating circumscribed polygon, we have
$$\frac{\omega(x'(s_{n,1}),x(s_{n,1})-x(s_{n,2}))}{\omega(x'(s_{n,1}),x'(s_{n,2}))}=-\frac{\omega(x'(s_{n,3}),x(s_{n,3})-x(s_{n,2}))}{\omega(x'(s_{n,3}),x'(s_{n,2}))}.$$ 
Setting $s_{n,2}=0$, we define the function $$G(s)=\frac{\omega(x'(s),x(s)-x(0))}{\omega(x'(s),x'(0))}.$$
The proof can be concluded as the one of Proposition \ref{L3}, by solving $G(s)=-G(t)$ and applying the Implicit Function Theorem.
\end{proof}
\noindent Finally, an argument analog to the proof of Proposition \ref{L2} gives the next precise characterization of the $\lambda_{n,i}$'s.
\begin{proposition}
     Under the same assumption of the previous proposition, it holds $$\lambda_{n,i}=\frac{\lambda}{n}-\frac{8}{5!}\frac{\lambda^2}{n^3}\int_0^\lambda k(s)ds+\frac{8}{5!}\frac{\lambda^3}{n^3}k\left(\frac{i\lambda}{n}\right)+o\left(\frac{1}{n^3}\right)$$uniformly in $i$ as $n\to\infty$.
\end{proposition}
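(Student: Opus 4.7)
The plan is to transcribe the proof of Proposition \ref{L2} essentially verbatim, with the coefficient $1/30$ replaced by $-8/5!$ throughout. The recursion (\ref{eq2}) supplied by Proposition \ref{L4} has the same structure as the symplectic recursion (\ref{eq1}) apart from the sign, so the same three-step scheme will work: a crude comparison estimate, then approximation by a continuous ODE, and finally a summation identity to fix the constant term.

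First I would run the comparison argument based on the auxiliary difference equations $v_{n,i+1} = v_{n,i}(1 - D v_{n,i}^3)$ and $V_{n,i+1} = V_{n,i}(1 + D V_{n,i}^3)$, with $D$ a uniform constant dominating $(8/5!)\max|k'|$, to obtain $\lambda_{n,i} = \lambda/n + O(1/n^3)$ uniformly in $i$. The sign in (\ref{eq2}) only swaps which of $v$ and $V$ provides the upper and lower bound.

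Next I would rescale $u_{n,i} := n\lambda_{n,i}$ and use $s_{n,i} = i\lambda/n + O(1/n^2)$ to rewrite (\ref{eq2}) as
$$u_{n,i+1} = u_{n,i} - \frac{8}{5!\,n^3}\,k'\!\left(\frac{i\lambda}{n}\right) u_{n,i}^4 + o\!\left(\frac{1}{n^3}\right),$$
and compare this recursion with the solution $u_n$ of the Cauchy problem
$$\begin{cases} u_n'(t) = -\dfrac{8}{5!\,\lambda}\,k'(t)\,\dfrac{u_n^4(t)}{n^2}, \\ u_n(\lambda/n) = u_{n,1}, \end{cases}$$
which is separable in $u_n^{-3}$. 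A discrete Gr\"onwall-type estimate on $z_{n,i} := u_{n,i} - u_n(i\lambda/n)$, mirroring (\ref{eqz}), will then give $z_{n,i} = o(1/n^2)$ uniformly in $i$.

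Finally, integrating the ODE from $\lambda/n$ to $(i+1)\lambda/n$ and expanding the cube root as at the end of the proof of Proposition \ref{L2}, together with the identity $\sum_{i=0}^{n-1}\lambda_{n,i+1} = \lambda$, will determine the analog of the constant $e_n$. In the limit $n \to \infty$ the resulting Riemann sum produces the two desired terms $-\tfrac{8}{5!}\tfrac{\lambda^2}{n^3}\int_0^\lambda k(s)\,ds$ and $\tfrac{8}{5!}\tfrac{\lambda^3}{n^3}k(i\lambda/n)$. The one delicate point, beyond careful transcription, is bookkeeping the minus sign in the separation-of-variables step: it reverses the sign of $k((i+1)\lambda/n) - k(i\lambda/n)$ inside the explicit formula for $u_n(i+1)^{-3}$, and this is the main place where an arithmetic slip would alter the final coefficient; once this sign is tracked correctly, the remaining computation is entirely routine.
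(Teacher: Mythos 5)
Your three-step scheme (crude comparison estimate, approximation by the separable ODE, summation over $i$ to fix the constant term) is exactly what the paper intends --- its own proof is the single sentence that an argument analogous to Proposition \ref{L2} applies --- and each of those steps does transfer verbatim. The problem is your final assertion. You correctly single out the sign in the recursion as the delicate point, but you then claim the computation ``produces the two desired terms'' without actually carrying it through, and it does not. At the order considered, the whole derivation of Proposition \ref{L2} is linear in the coefficient $c$ of the recursion $\lambda_{n,i+1}=\lambda_{n,i}+c\,k'(s_{n,i})\lambda_{n,i}^4+\ldots$: separation of variables gives $u_n(i+1)^{-3}=u_n(i)^{-3}-\tfrac{3c}{\lambda n^2}\bigl[k\bigl(\tfrac{(i+1)\lambda}{n}\bigr)-k\bigl(\tfrac{i\lambda}{n}\bigr)\bigr]$, the expansion of the cube root retains only the term linear in $c$, and the normalization $\sum_i\lambda_{n,i}=\lambda$ is a linear constraint. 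The output is therefore $\lambda_{n,i}=\tfrac{\lambda}{n}-\tfrac{c\lambda^2}{n^3}\int_0^\lambda k(s)\,ds+\tfrac{c\lambda^3}{n^3}k\bigl(\tfrac{i\lambda}{n}\bigr)+o(n^{-3})$. With $c=\tfrac{1}{30}$ this reproduces Proposition \ref{L2}; with $c=-\tfrac{8}{5!}$ from \eqref{eq2} it yields $\lambda_{n,i}=\tfrac{\lambda}{n}+\tfrac{8}{5!}\tfrac{\lambda^2}{n^3}\int_0^\lambda k(s)\,ds-\tfrac{8}{5!}\tfrac{\lambda^3}{n^3}k\bigl(\tfrac{i\lambda}{n}\bigr)+o(n^{-3})$, i.e.\ the displayed statement with both correction terms sign-reversed.

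So the proof as you describe it establishes a formula incompatible with the one to be proved: the recursion \eqref{eq2} of Proposition \ref{L4} and the present statement cannot both hold as written, and your transcription inherits this inconsistency rather than resolving it. For what it is worth, a direct expansion gives $G(s)=\tfrac{\omega(x'(s),x(s)-x(0))}{\omega(x'(s),x'(0))}=\tfrac{s}{2}+\tfrac{k(0)}{24}s^3+\tfrac{k'(0)}{60}s^4+O(s^5)$, and solving $G(s)+G(t)=0$ yields $t(s)=-s-\tfrac{1}{15}k'(0)s^4+O(s^5)$, which confirms the coefficient $-\tfrac{8}{5!}=-\tfrac{1}{15}$ in \eqref{eq2}; this places the sign discrepancy in the statement itself. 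Either way, you must either exhibit a compensating sign somewhere in the transcription (there is none, by the linearity-in-$c$ observation above) or explicitly flag and resolve the mismatch between \eqref{eq2} and the claimed expansion; until then the argument does not prove the proposition as stated.
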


\section{Proof of Theorem \ref{TH}} 
We finally state and prove the higher order terms of $\delta(C, \mathcal{P}_n^i)$ and $\delta(C, \mathcal{P}_n^c)$ defined in Sections \ref{SB} and \ref{OB} respectively. This theorem is a refinement of Theorem 1 and Theorem 2 in \cite{ML}. In view of equalities (\ref{beta-MA}) and (\ref{beta-MB}), the proof of Theorem \ref{TH} is a straightforward application of this result.  
\begin{theorem}\label{THM1} Let $C$ be a strictly-convex planar domain with smooth boundary $\partial C$.  Suppose that $\partial C$ has everywhere positive curvature. Denote by $k(s)$  the affine curvature  of $\partial C$ with affine parameter $s$. Let $\lambda$ be the affine length of the boundary. 
\begin{enumerate}
\item[$(a)$] The formal expansion of $\delta(C, \mathcal{P}_n^i)$ at $n\to\infty$ is given by \begin{equation*}
    \delta(C, \mathcal{P}_n^i)= a_2\frac{1}{n^2}+a_4\frac{1}{n^4}\int_0^\lambda k(s)ds+\frac{1}{n^6}\left[a_6\int_0^\lambda k^2(s) ds+b_6\left(\int_0^\lambda k(s) ds\right)^2\right]+o\left(\frac{1}{n^6}\right)
\end{equation*} with coefficients
$$a_2=\frac{1}{12}\lambda^3,\qquad a_4=-\frac{1}{2\cdot5!}\lambda^4, \qquad a_6=-\frac{9}{10\cdot7!}\lambda^6, \qquad b_6= \frac{1}{30\cdot5!}\lambda^5.$$
\item[$(b)$]  The formal expansion of $\delta(C, \mathcal{P}_n^c)$ at $n\to\infty$ is given by
\begin{equation*}
    \delta(C, \mathcal{P}_n^c)= a_2\frac{1}{n^2}+a_4\frac{1}{n^4}\int_0^\lambda k(s)ds+\frac{1}{n^6}\left[a_6\int_0^\lambda k^2(s) ds+b_6\left(\int_0^\lambda k(s) ds\right)^2\right]+o\left(\frac{1}{n^6}\right)
\end{equation*}
with coefficients
\begin{equation*}
a_2=\frac{1}{24}\lambda^3,\qquad a_4=\frac{1}{2\cdot5!}\lambda^4, \qquad a_6=\frac{421}{5\cdot8!}\lambda^6, \qquad b_6=- \frac{1}{5\cdot5!}\lambda^5.
\end{equation*}
\end{enumerate}
\end{theorem}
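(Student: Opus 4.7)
The plan is to compute $\delta(C,\mathcal{P}_n^i)=\sum_{i=1}^n F(s_{n,i-1},s_{n,i})$ and $\delta(C,\mathcal{P}_n^c)=\sum_{i=1}^n H(s_{n,i-1},s_{n,i})$ by substituting the Taylor expansions of $F$ and $H$ from the propositions of Section~3 and~4, and then evaluating each summand using the refined asymptotics for $\lambda_{n,i}$. Set $\bar L=\lambda/n$, $L_i=\lambda_{n,i}$, $\delta_i=L_i-\bar L$, and $K=\int_0^\lambda k(s)\,ds$. Proposition \ref{L2} (and its circumscribed analogue) gives $\delta_i=\frac{\lambda^2}{30n^3}[\lambda k(s_{n,i})-K]+o(1/n^3)$ in the inscribed case (with the constant $-8/5!$ replacing $1/30$ in the circumscribed case). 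Two structural identities drive the computation: $\sum_i\delta_i=0$ because the $L_i$ partition $[0,\lambda]$, and for any smooth $\lambda$-periodic function $f$ one has $\sum_{i=1}^n f(s_{n,i})=\frac{n}{\lambda}\int_0^\lambda f\,ds+O(n^{-\infty})$ by spectral convergence of the equispaced Riemann sums (the nodes $s_{n,i}$ differ from the equispaced ones by $O(1/n^2)$, harmless at our working order).

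Next I would expand $L_i^j=(\bar L+\delta_i)^j$ for $j=3,5,6,7$ and track which pieces survive to order $1/n^6$. The leading term $\sum L_i^3$ contributes $\lambda^3/(12n^2)$ (matching $a_2$), while the quadratic fluctuation $\frac{3\bar L}{12}\sum\delta_i^2$ feeds the $1/n^6$ coefficients: after expanding $\delta_i^2$ and applying the Riemann-sum identity, it produces contributions proportional to both $\int k^2$ and $K^2$. The quintic term $-\sum L_i^5 k(s_{n,i-1})/240$ yields $-\lambda^4/(240n^4)\cdot\int k$ (matching $a_4$) at leading order, and its first-order $\delta_i$-correction again adds to both $\int k^2$ and $K^2$ at order $1/n^6$. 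The sextic and septic pieces enter only at order $1/n^6$: the $k'$-piece naively gives $\int k'\,ds=0$, but reviving the $O(1/n^2)$ shift of $s_{n,i-1}$ away from $(i-1)\lambda/n$ produces an extra contribution that can be simplified using $\int k k'\,ds=0$; the $k''$-piece vanishes because $\int k''\,ds=0$; and the $k^2$-piece directly supplies $\int k^2$.

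The concluding step is to collect all contributions, verify that everything outside $\int k^2$ and $K^2$ cancels by periodicity, and read off the constants $a_6$ and $b_6$. For part $(b)$ the same workflow applies with $H$ replacing $F$ and with the circumscribed $\lambda_{n,i}$-recursion replacing Proposition \ref{L2}; the different numerical constants there explain the altered signs and magnitudes of $a_2,a_4,a_6,b_6$. Theorem \ref{TH} is then immediate from (\ref{beta-MA}) and (\ref{beta-MB}): dividing by $n$ (up to the sign and factor arising from $-\omega$ versus $\omega$) turns the expansion in $1/n^2$ into the expansion in $\rho$, and the coefficients match the claimed $\beta_1,\beta_3,\beta_5,\beta_7$.

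The main obstacle is the bookkeeping at order $1/n^6$, where four distinct Taylor coefficients of $F$ (or $H$) combine with three scales of fluctuations in $\delta_i$; one must check that all stray $\int k'{}^2$, $\int k''$, $\int kk'$ contributions cancel by periodicity or by integration by parts, so that only the two invariants $\int_0^\lambda k^2(s)\,ds$ and $\bigl(\int_0^\lambda k(s)\,ds\bigr)^2$ survive in the final formula. Once this cancellation is established, the numerical values $a_6,b_6$ follow by straightforward arithmetic.
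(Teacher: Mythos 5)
Your overall strategy coincides with the paper's: sum the Taylor expansions of $F$ (resp.\ $H$) over the $n$ arcs, insert the refined asymptotics for $\lambda_{n,i}$ from Proposition \ref{L2} (resp.\ its circumscribed analogue), expand $\lambda_{n,i}^j=(\lambda/n+\delta_i)^j$, and evaluate the resulting sums as Riemann sums. However, your second ``structural identity'' is false at the order you need it, and this breaks the computation of $a_6$ and $b_6$. The nodes $s_{n,i}$ are not equispaced: the drift $\sigma_i:=s_{n,i}-i\lambda/n=\sum_{j\le i}\delta_j$ accumulates to size $O(1/n^2)$ with a definite leading profile, namely $\sigma_i=\frac{\lambda}{30n^2}\bigl[\lambda\int_0^{i\lambda/n}k(u)\,du-\frac{i\lambda}{n}K\bigr]+o(1/n^2)$ by Proposition \ref{L2}. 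Consequently, for smooth $\lambda$-periodic $f$,
$$\sum_{i=1}^n f(s_{n,i})=\frac{n}{\lambda}\int_0^\lambda f\,ds+\sum_{i=1}^n f'\Bigl(\frac{i\lambda}{n}\Bigr)\sigma_i+o\Bigl(\frac1n\Bigr),\qquad \sum_{i=1}^n f'\Bigl(\frac{i\lambda}{n}\Bigr)\sigma_i=\frac{1}{30n}\int_0^\lambda f'(t)\Bigl[\lambda\int_0^t k\,du-tK\Bigr]dt+o\Bigl(\frac1n\Bigr),$$
and for $f=k$ an integration by parts gives $\frac{1}{30n}\bigl(K^2-\lambda\int_0^\lambda k^2\bigr)$: this is a genuine $O(1/n)$ term, not $O(n^{-\infty})$ and not $o(1/n)$.

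This correction is negligible everywhere except in the one place where your error budget is tightest: the leading quintic term $-\frac{1}{240}(\lambda/n)^5\sum_i k(s_{n,i})$ carries the prefactor $\lambda^5/n^5$, so an $O(1/n)$ error in $\sum_i k(s_{n,i})$ lands exactly at order $1/n^6$. Concretely, your scheme omits the contribution $\frac{\lambda^5}{7200\,n^6}\bigl(\lambda\int_0^\lambda k^2-K^2\bigr)$; the quintic piece then yields $-\frac{\lambda^5}{1440\,n^6}\bigl(\lambda\int k^2-K^2\bigr)$ instead of the correct $-\frac{\lambda^5}{1800\,n^6}\bigl(\lambda\int k^2-K^2\bigr)$, and after collecting all terms you would obtain $a_6=-\lambda^6/3150$ rather than $-9\lambda^6/(10\cdot 7!)=-\lambda^6/5600$ (with a correspondingly wrong $b_6$); the same defect propagates to part $(b)$. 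The paper avoids this by never using unweighted node sums: it keeps the length-weighted Riemann sum $\sum_i k(s_{n,i})\lambda_{n,i}$, proves via a two-stage Taylor expansion (using $\int_0^\lambda k'=\int_0^\lambda k''=0$) that it equals $\int_0^\lambda k+o(1/n^2)$, and isolates the fluctuation in the explicitly computed term $\sum_i k(s_{n,i})\lambda_{n,i}\bigl(\lambda_{n,i}^4-\lambda^4/n^4\bigr)$. To repair your argument, either adopt that weighted decomposition or replace your Riemann-sum identity by the corrected version above and carry the extra $\frac{1}{30n}\bigl(K^2-\lambda\int k^2\bigr)$ through the quintic piece. (Your invocation of the node shift for the sextic $k'$-piece is, by contrast, unnecessary: there the prefactor is already $\lambda^6/n^6$ and $\sum_i k'(s_{n,i})=O(1/n)$, so that piece is $O(1/n^7)$ regardless.)
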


\begin{proof}
Theorem 1 in \cite{ML} establishes that
$$
    \delta(C, \mathcal{P}_n^i) - \frac{1}{12}\frac{\lambda^3}{n^2} + \frac{1}{2 \cdot 5!}\frac{\lambda^4}{n^4}\int_0^\lambda k(s)ds = o\left(\frac{1}{n^4}\right)
$$
as $n \to \infty$. We determine
\begin{equation} \label{differenza}
\lim_{n \to \infty} 
n^6 \left( \delta(C, \mathcal{P}_n^i) - \frac{1}{12}\frac{\lambda^3}{n^2} + \frac{1}{2 \cdot 5!}\frac{\lambda^4}{n^4}\int_0^\lambda k(s)ds \right).
\end{equation}
By means of Proposition \ref{L1}, we get  
$$\delta(C, \mathcal{P}_n^i)=\frac{1}{2}\sum_{i=1}^n\left[ \frac{\lambda_{n,i}^3}{3!}-\frac{\lambda_{n,i}^5}{5!}k(s_{n,i})-\frac{3\lambda_{n,i}^6}{6!}k'(s_{n,i})-\frac{\lambda_{n,i}^7}{7\cdot5!}k''(s_{n,i})+\frac{\lambda_{n,i}^7}{7!}k^2(s_{n,i})+o(\lambda_{n,i}^7)\right]$$ 
$$=\frac{1}{2}\sum_{i=1}^n\left[\frac{1}{3!}\left(\frac{\lambda}{n}+\left(\lambda_{n,i}-\frac{\lambda}{n}\right)\right)^3-\frac{\lambda_{n,i}^5}{5!}k(s_{n,i}) -\frac{3}{6!}\frac{\lambda^6}{n^6}k'(s_{n,i})-\frac{1}{7\cdot5!}\frac{\lambda^7}{n^7}k''(s_{n,i})+\frac{1}{7!}\frac{\lambda^7}{n^7}k^2(s_{n,i})+o\left(\frac{1}{n^7}\right)\right].$$ 
We compute (\ref{differenza}) by dividing the limit in four terms.\\
\indent First, applying Proposition \ref{L2}, we consider 
$$\frac{1}{2}\frac{1}{3!}\sum_{i=1}^n\left[\frac{\lambda}{n}-\left(\lambda_{n,i}-\frac{\lambda}{n}\right)\right]^3-\frac{1}{12}\frac{\lambda^3}{n^2}=\frac{1}{12}\sum_{i=1}^n\left[\frac{\lambda^3}{n^3}-3\frac{\lambda^2}{n^2}\left(\lambda_{n,i}-\frac{\lambda}{n}\right)+3\frac{\lambda}{n}\left(\lambda_{n,i}-\frac{\lambda}{n}\right)^2-\left(\lambda_{n,i}-\frac{\lambda}{n}\right)^3\right]-\frac{1}{12}\frac{\lambda^3}{n^2}$$
$$=\frac{1}{4}\sum_{i=1}^n\left[\frac{1}{900}\frac{\lambda^5}{n^7}\left(\int_0^\lambda k(s)ds\right)^2+\frac{1}{900}\frac{\lambda^6}{n^7}k^2\left(\frac{i\lambda}{n}\right)-\frac{2}{900}\frac{\lambda^6}{n^7} k\left(\frac{i\lambda}{n}\right) \int_0^\lambda k(s)ds +o\left(\frac{1}{n^7}\right)\right].$$
And the corresponding limit gives: 
        $$\lim_{n\to\infty}\frac{n^6}{4}\sum_{i=1}^n\left[\frac{1}{900}\frac{\lambda^5}{n^7}\left(\int_0^\lambda k(s)ds\right)^2+\frac{1}{900}\frac{\lambda^6}{n^7}k^2\left(\frac{i\lambda}{n}\right)-\frac{2}{900}\frac{\lambda^6}{n^7} k\left(\frac{i\lambda}{n}\right) \int_0^\lambda k(s)ds +o\left(\frac{1}{n^7}\right)\right]$$
        \begin{equation} \label{primo contributo}
        =\frac{1}{4\cdot900}\lambda^6\int_0^\lambda k^2(s)ds-\frac{1}{4\cdot900}\lambda^5\left(\int_0^\lambda k(s)ds\right)^2.
        \end{equation}
\indent Second, we compute the limit: 
$$\lim_{n\to\infty}\frac{n^6}{2}\sum_{i=1}^n\left[-\frac{\lambda_{n,i}^5}{5!}k(s_{n,i})\right]+\frac{1}{2\cdot5!} \lambda^4 n^2\int_0^\lambda k(s)ds$$
        $$=\lim_{n\to\infty} \frac{1}{2\cdot 5!} \lambda^4 n^2\left[\sum_{i=1}^n\left(-k(s_{n,i})\lambda_{n,i}+\int_{s_{n,i}}^{s_{n,i+1}} k(s)ds\right)\right]+\frac{n^6}{2\cdot5!}\sum_{i=1}^n-k(s_{n,i})\lambda_{n,i}\left(\lambda_{n,i}^4-\frac{\lambda^4}{n^4}\right).$$
        The limit of the first summand is zero, in fact:
        $$\lim_{n\to\infty} \frac{1}{2\cdot 5!}{\lambda^4 n^2}\left[\sum_{i=1}^n\left(-k(s_{n,i})\lambda_{n,i}+\int_{s_{n,i}}^{s_{n,i+1}} k(s)ds\right)\right]=\lim_{n\to\infty}\frac{1}{2\cdot 5!}{\lambda^4 n^2}\left[\sum_{i=1}^n\int_{s_{n,i}}^{s_{n,i+1}} \left( k(s)-k(s_{n,i}) \right)ds\right]$$
        $$=\lim_{n\to\infty}\frac{1}{2\cdot 5!}{\lambda^4 n^2} \left[ \sum_{i=1}^n\left(\int_{s_{n,i}}^{s_{n,i+1}} (k'(s_{n,i})(s-s_{n,i})+k''(s_{n,i})\frac{(s-s_{n,i})^2}{2}+o((s-s_{n,i})^2)\right) ds \right]$$ 
        $$=\lim_{n\to\infty}\frac{1}{2\cdot 5!}\lambda^4n^2\left[ \sum^n_{i=1} k'(s_{n,i} )\frac{\lambda_{n,i}^2}{2} +k''(s_{n,i})\frac{\lambda_{n,i}^3}{3!}\right] $$
        where, in the last equality, we dropped $o((s-s_{n,i})^2)$ because after integration and summation it gives a negligible term. Now --again by Proposition \ref{L2}-- the above limit equals to        
        $$=\lim_{n\to\infty} \frac{1}{2\cdot 5!}{\lambda^4 n^2} \left[ \sum_{i=1}^n k'(s_{n,i})\frac{\lambda_{n,i}}{2}\frac{\lambda}{n} +k''(s_{n,i})\frac{\lambda_{n,i}}{3!}\frac{\lambda^2}{n^2}\right].$$
The second term clearly converges to a constant times $\int^\lambda_0 k''(s)ds$ which is $0$. Also the first term converges to $0$, and to see this it is sufficient to rewrite the summation as
\begin{equation} \label{Taylor k''}
\lim_{n\to\infty} \frac{1}{4\cdot 5!}{\lambda^5 n} \left[ \sum_{i=1}^n \int_{s_{n,i}}^{s_{n,i+1}}(k'(s_{n,i})-k'(s))ds\right]
\end{equation}
and then use the Taylor expansion of $k'(s)$ around $s_{n,i}$. Therefore, it remains to compute
\begin{equation} \label{secondo limite}
\lim_{n\to \infty} \frac{n^6}{2\cdot5!} \sum_{i=1}^n-k(s_{n,i})\lambda_{n,i}\left(\lambda_{n,i}^4-\frac{\lambda^4}{n^4}\right) = -\frac{1}{15\cdot5!}\lambda^6\int_0^\lambda k^2(s)ds+\frac{1}{15\cdot5!}\lambda^5\left(\int_0^\lambda k(s)ds\right)^2.
\end{equation}
where, the last equality, is a straightforward application of Proposition \ref{L2}.  Again, we have

        \indent By arguing as in (\ref{Taylor k''}), we obtain that the third term (of order $6$) in the expansion of (\ref{differenza}) does not give contribution:
        $$\lim_{n\to\infty}\frac{n^6}{2}\sum_{i=1}^n-\frac{3}{6!}k'(s_{n,i})\lambda_{n,i}^6= 0.$$

        \indent Finally, we compute the limit of the last term:
        \begin{equation*}
\lim_{n\to\infty} \frac{n^6}{2} \sum_{i=1}^n \left[
-\frac{1}{7\cdot5!}\frac{\lambda^7}{n^7}k''(s_{n,i})+\frac{1}{7!}\frac{\lambda^7}{n^7}k^2(s_{n,i})\right] =\lim_{n\to\infty}\frac{\lambda^6}{2}\sum_{i=1}^n \left[-\frac{1}{7\cdot5!}\lambda_{n,i} k''(s_{n,i})+\frac{1}{7!}\lambda_{n,i} k^2(s_{n,i})
\right] 
\end{equation*}
\begin{equation} \label{ultimo contributo}
=\frac{1}{2\cdot7!}\lambda^6\int_0^\lambda k^2(s)ds.
        \end{equation}
The coefficients of point $(a)$ of the theorem are obtained by summing up (\ref{primo contributo}), (\ref{secondo limite}) and (\ref{ultimo contributo}). From (\ref{differenza}), we immediately obtain that the term of order $5$ has zero coefficient. This fact follows also from S. Marvizi and R. Melrose’s theory, as recalled in Section \ref{MAIN}. \\
\indent 
Point $(b)$ can be proved in the same way. 
\end{proof}
\noindent As pointed out in \cite{ALB}[Theorem 6], in the symplectic billiard case, the first two coefficients $\beta_1$ and $\beta_3$ make it possible to distinguish an ellipse. 
In fact, the inequality
$$ 3 \beta_3 \le - 2 \pi^2 \beta_1$$
equals to the affine isoperimetric inequality
$$\lambda^3 \le 8\pi^2 Area(C),$$
which always holds for every strictly-convex closed curve $\partial C$ and it is an equality only for ellipses. This is not the case of outer billiards, since $\beta_1 = 0$. However, in the next corollary we underline that coefficients $\beta_5$ and $\beta_7$ --even if not geometrically significant-- allow to fix an ellipse, also in the outer billiard case.
\begin{corollary} \label{TAB} Same assumptions of Theorem \ref{TH}. The coefficients $\beta_5$ and $\beta_7$ recognize an ellipse. In particular:
\begin{enumerate}
\item[$(a)$] For symplectic billiards, one always has the inequality 
\begin{equation}
    42\lambda^3\beta_7\leq5!(\beta_5)^2,
\end{equation}with equality if and only if $\partial C$ is an ellipse.
\item[$(b)$] For outer billiards, one always has the inequality 
\begin{equation}
    7\lambda^3\beta_7\geq170(\beta_5)^2,
\end{equation}with equality if and only if $\partial C$ is an ellipse.
\end{enumerate}
\end{corollary}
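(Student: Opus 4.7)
The plan is to reduce both inequalities in the corollary to a single application of the Cauchy--Schwarz estimate
\[ \lambda\int_0^\lambda k^2(s)\,ds \;\ge\; \left(\int_0^\lambda k(s)\,ds\right)^2, \]
applied to the affine curvature $k$ on the interval $[0,\lambda]$.

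For part $(a)$, I would substitute the symplectic-case formulas for $\beta_5$ and $\beta_7$ from Theorem \ref{TH} directly into the claimed inequality $42\lambda^3\beta_7\le 5!\,(\beta_5)^2$ and collect terms. A short computation shows that both the coefficient of $\lambda^9\int_0^\lambda k^2(s)\,ds$ and (minus) the coefficient of $\lambda^8\bigl(\int_0^\lambda k(s)\,ds\bigr)^2$ in the difference equal $\tfrac{3}{200}$; after factoring this common constant, the inequality collapses exactly to the Cauchy--Schwarz estimate above. For part $(b)$, substituting the outer-billiard formulas for $\beta_5$ and $\beta_7$ into $7\lambda^3\beta_7\ge 170\,(\beta_5)^2$ reduces, by an entirely analogous computation with common coefficient $\tfrac{421}{28800}$, to the very same Cauchy--Schwarz estimate.

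Once both parts are reduced to this common inequality, Cauchy--Schwarz applied to the pair $(k,1)$ on $[0,\lambda]$ immediately gives the desired inequality, with equality if and only if $k(s)$ is constant on $[0,\lambda]$. I would then invoke the classical characterization from affine differential geometry: a strictly-convex closed planar curve has constant affine curvature if and only if it is an ellipse. This yields the equality case simultaneously in $(a)$ and $(b)$.

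The main obstacle is not conceptual but purely arithmetic: one must verify that the particular numerical factors $42$, $5!$, $7$ and $170$ appearing in the statement are precisely those that rescale the two a priori different linear combinations of $\int_0^\lambda k^2\,ds$ and $(\int_0^\lambda k\,ds)^2$ to the same canonical Cauchy--Schwarz form. Given the explicit expressions for $\beta_5$ and $\beta_7$ already furnished by Theorem \ref{TH}, this verification is entirely mechanical and requires no further geometric input beyond the classical ellipse characterization.
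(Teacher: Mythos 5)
Your proposal is correct and follows essentially the same route as the paper: substitute the explicit formulas for $\beta_5$ and $\beta_7$ from Theorem \ref{TH} and reduce the claimed inequalities to the Cauchy--Schwarz estimate $\left(\int_0^\lambda k\,ds\right)^2\le\lambda\int_0^\lambda k^2\,ds$, with equality precisely for constant affine curvature, i.e. ellipses. The only (cosmetic) difference is that you factor the full difference into a single positive multiple of the Cauchy--Schwarz deficit, whereas the paper bounds $\beta_7$ directly; your numerical factors $\tfrac{3}{200}$ and $\tfrac{421}{28800}$ check out.
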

\begin{proof}
    By Cauchy-Schwartz inequality, it holds
    $$\left(\int_0^\lambda k(s) ds\right)^2\leq\lambda\int_0^\lambda k^2(s)ds,$$
    with equality if and only if $k(s)$ is constant (that is, $\partial C$ is an ellipse). As a consequence, in the symplectic billiard case, we get
    $$\beta_7\leq \left( -\frac{9}{5\cdot7!} + \frac{1}{15 \cdot 5!} \right) \lambda^5 \left(\int_0^\lambda k(s) ds\right)^2 = 
    \frac{\lambda^5}{7!}\left(\int_0^\lambda k(s) ds\right)^2 = \frac{5!}{42\lambda^3}(\beta_5)^2$$
    which gives the result of point $(a)$. Point $(b)$ is obtained analogously. \end{proof}

\section{Ellipses and circles} \label{EC}
In the case of circles and ellipses, all coefficients of the Mather's $\beta$-function --both for symplectic and outer billiards-- can be easily obtained directly. In particular, by the affine equivariance of both maps, it is sufficient to consider the case of circular tables. In this final section, we compute these coefficients for circles (and therefore for ellipses) and check their consistency with the $\beta_i$'s of Theorem \ref{TH}. 

\subsection{Symplectic billiards}
For a disc centered in $0$ with radius $R$, the generating function is twice the area of the triangle $x0y$: 
$$\omega(x,y) = 2 Area(x0y)$$
and periodic orbits of rotation number $\frac{1}{n}$ ($n \ge 3$) correspond to inscribed regular polygons with $n$ edges. \\
\noindent The generating function for an ellipse $\frac{x_1^2}{a^2} + \frac{x_2^2}{b^2} = 1$ is obtained by applying the affine transformation $\begin{pmatrix}
   a/R & 0\\0 & b/R
 \end{pmatrix}$, so that
$$\omega(x,y) = ab\sin\left(\frac{2\pi}{n}\right).$$
As a consequence, from Definition \ref{Mather beta} of Mather's $\beta$-function, it follows


\begin{equation}
\beta \left(\frac{1}{n}\right)= ab
\sum_{k=0}^\infty \frac{(-1)^{k+1}}{(2k+1)!}\left(\frac{2\pi}{n}\right)^{2k+1}.
 \end{equation}
Since the affine length and curvature of the ellipse (and circle, for $a = b = R$) are respectively $\lambda = 2\pi (ab)^{1/3}$ and $k=(ab)^{-2/3}$, the above coefficients are consistent with the ones of point $(a)$ of Theorem \ref{TH}. 

 \subsection{Outer billiards}
For the circular table of boundary $x_1^2 + x_2^2 = R^2$, periodic orbits of rotation number $\frac{1}{n}$ ($n \ge 3$) correspond to circumscribed regular polygons with $n$ edges and the generating function is the area:
$$R^2 \left[ \tan \left( \frac{\pi}{n} \right) - \frac{\pi}{n} \right]$$
of the grey region in Figure \ref{Outcircles}. Therefore, the generating function for an ellipse $\frac{x_1^2}{a^2} + \frac{x_2^2}{b^2} = 1$ results:
$$\omega(x,y) = ab \left[ \tan \left( \frac{\pi}{n} \right) - \frac{\pi}{n} \right]$$ 
\begin{figure}[H]
    \centering
    \includegraphics[scale=0.35]{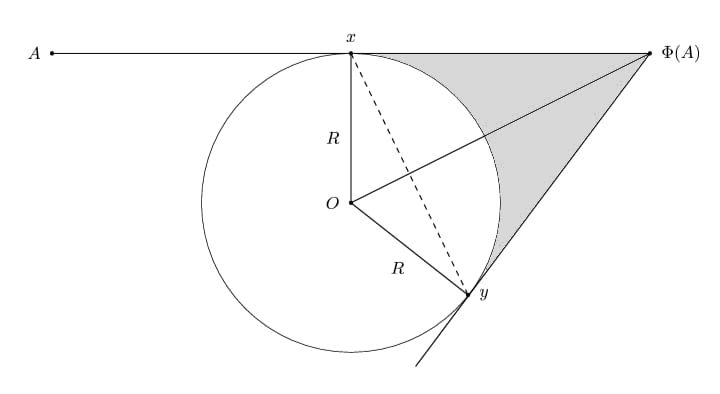}
    \caption{The outer billiard in circles.}
    \label{Outcircles}
\end{figure}
\noindent and the corresponding Mather's $\beta$-function is
$$ab \sum_{k=2}^{\infty} \frac{(-1)^{k-1}4^k(4^k-1)B_{2k}}{(2k)!}\left(\frac{\pi}{n}\right)^{2k-1}$$
(the $B_{2k}$'s are the Bernoulli numbers), whose coefficients are consistent with the ones of point $(b)$ of Theorem \ref{TH}.

\end{document}